\newcommand{\Debug}{0}
\newcommand{\mcm}[3]{\newcommand{#1}[#2]{{\ensuremath{#3}}}} 
\mcm{\tuple}{1}{\langle #1 \rangle}
\mcm{\name}{1}{\ulcorner #1 \urcorner}
\mcm{\Nbb}{0}{\mathbb{N}}
\mcm{\Zbb}{0}{\mathbb{Z}}
\mcm{\Rbb}{0}{\mathbb{R}}
\mcm{\Cbb}{0}{\mathbb{C}}
\mcm{\Qbb}{0}{\mathbb{Q}}
\mcm{\Bcal}{0}{\cal B}
\mcm{\Ccal}{0}{\cal C}
\mcm{\Dcal}{0}{\cal D}
\mcm{\Ecal}{0}{\cal E}
\mcm{\Fcal}{0}{\cal F}
\mcm{\Gcal}{0}{\cal G}
\mcm{\Hcal}{0}{\cal H}
\mcm{\Ical}{0}{\cal I}
\mcm{\Jcal}{0}{\cal J}
\mcm{\Kcal}{0}{\cal K}
\mcm{\Lcal}{0}{\cal L}
\mcm{\Mcal}{0}{\cal M}
\mcm{\Ncal}{0}{\cal N}
\mcm{\Ocal}{0}{{\cal O}}
\mcm{\Pcal}{0}{{\cal P}}
\mcm{\Qcal}{0}{{\cal Q}}
\mcm{\Rcal}{0}{{\cal R}}
\mcm{\Scal}{0}{{\cal S}}
\mcm{\Tcal}{0}{{\cal T}}
\mcm{\Ucal}{0}{{\cal U}}
\mcm{\Vcal}{0}{{\cal V}}
\mcm{\Wcal}{0}{{\cal W}}
\mcm{\Xcal}{0}{{\cal X}}
\mcm{\Ycal}{0}{{\cal Y}}
\mcm{\Mfrak}{0}{\mathfrak M}
\mcm{\restric}{0}{\upharpoonright}
\mcm{\upset}{0}{\uparrow}
\mcm{\onto}{0}{\twoheadrightarrow}
\mcm{\smallNbb}{0}{{\small \mathbb{N}}}
\DeclareMathOperator{\preop}{op}
\mcm{\op}{0}{^{\preop}}
\newcommand{\se}{\subseteq}
\newcommand{\theoremize}[2]{\newaliascnt{#1}{thm} \newtheorem{#1}[#1]{#2} \aliascntresetthe{#1}}
\theoremstyle{plain}
\newtheorem{thm}{Theorem}[section]
\theoremstyle{definition}
\theoremstyle{plain}
\title{\scshape Every planar graph with the Liouville property is amenable
}
\newcommand{\sm}{\setminus}
\newcommand{\comm}[1]{}
\renewcommand{\mathring}[1]{#1^\circ}
\newcommand{\Gd}{\ensuremath{G^\diamond}}
\newcommand{\Hd}{\ensuremath{H^\diamond}}
\newcommand{\Gdd}{\ensuremath{G^{*\diamond}}}
\newcommand{\plg}{plane line graph}
\newcommand{\rbt}{roundabout}
\newcommand{\Gr}{\ensuremath{G^\circ}}%\mathring{G}}}
\newcommand{\Grd}{\ensuremath{G^{* \circ}}}%\Grd}}
\newcommand{\Hr}{\ensuremath{H^\circ}}%\mathring{G}}}
\newcommand{\Hrd}{\ensuremath{H^{* \circ}}}%\Grd}}
\newcommand{\labtequ}[2]{%\labtequc{#1}{#2}}
 \begin{equation} \label{#1} 	\begin{minipage}[c]{0.9\textwidth}  #2 \end{minipage} 
\ignorespacesafterend \end{equation} } 
\newcommand{\cc}{\ensuremath{\mathcal C}}
\newcommand{\ceil}[1]{\ensuremath{\left\lceil #1 \right\rceil}}
\newcommand{\floor}[1]{\ensuremath{\left\lfloor #1 \right\rfloor}}
\newcommand{\arE}{{E}}
\newcommand{\R}{\ensuremath{\mathbb R}}
\newcommand{\Z}{\ensuremath{\mathbb Z}}
\newcommand{\defi}[1]{{\emph{#1}}}
\newcommand{\knl}{Kirchhoff's node law}
\newcommand{\kcl}{Kirchhoff's cycle law}
\newcommand{\Dhf}{Dirichlet harmonic function}
\newcommand{\rw}{random walk}
\renewcommand{\iff}{if and only if}
\newcommand{\fe}{for every}
\newcommand{\st}{such that}
\newcommand{\ti}{there is}
\newcommand{\g}{\ensuremath{G\ }}
\newcommand{\G}{\ensuremath{G}}
\DeclareMathOperator{\supp}{supp}
\author{Johannes Carmesin\thanks{Supported by EPSRC grant EP/T016221/1.}
\medskip 
\\
{School of Mathematics}
\\
  {University of Birmingham}\\
{B15 2TT, UK} 
\and Agelos Georgakopoulos\thanks{Supported by EPSRC grant EP/L002787/1, 
and  by the European Research Council (ERC) under the European Union's Horizon 2020 research and 
innovation programme (grant agreement No 639046).}
\medskip 
\\
  {Mathematics Institute}\\
 {University of Warwick}\\
  {CV4 7AL, UK}
}
\begin{document}
\maketitle
\begin{abstract}
We introduce a strengthening of the notion of transience for planar maps in order to relax the 
standard condition of bounded degree appearing in various results, in particular, 
the existence of \Dhf s proved by Benjamini \& Schramm. As a corollary we obtain that every planar 
non-amenable graph admits non-constant \Dhf s.
\end{abstract}

\section{Introduction}

A well-known result of Benjamini \& Schramm \cite{BeSchrHar,BeSchrST} states that every transient planar graph with bounded 
vertex degrees admits non-constant harmonic functions with finite Dirichlet energy; we will call such a function a (non-constant) \defi{\Dhf} from now on. In 
particular, such a graph does not have the Liouville property. Two independent proofs of this 
theorem were given in \cite{BeSchrHar,BeSchrST}, one using circle packings and one using square 
tilings. 

The bounded degree condition was essential in both these proofs, and is in fact necessary: consider 
for example a 1-way infinite path where the $n$th edge has been duplicated by $2^n$ parallel edges. Still, there are 
natural classes of unbounded degree graphs where such obstructions do not occur, and it is 
interesting to ask whether the above result can be extended to graphs with unbounded degrees in a 
meaningful way. Recently, planar graphs with unbounded degrees have been 
attracting a lot of interest, in particular due to research on coarse geometry \cite{BenCoa}, random 
walks \cite{ABGN,intersection,planarPB,gwynne_tutte_2018} and random planar maps %related to Liouville quantum gravity 
\cite{Angel2018,pshit}. 
Motivated by this, our main result extends the aforementioned result  of Benjamini \& Schramm to 
unbounded degree graphs by replacing the transience condition with a stronger one, which we call 
\defi{roundabout-transience} and explain below:

\begin{thm}\label{thm:UK-trans}\empty
Let $G$ be a locally finite roundabout-transient planar map. Then $G$ admits a non-constant \Dhf.
\end{thm}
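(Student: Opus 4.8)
My plan is to follow the square tiling proof of Benjamini \& Schramm, using UK-transience exactly where their argument relied on bounded degree. Throughout I would equip \G{} with unit resistances, so that by \Olaw{} the potential drop across an edge equals the current through it. First I would use UK-transience to produce a \trc{} $I$ on \G{} --- a flow of finite energy running between the ``ends'' of the map --- together with the potential function $h$ it determines via \Olaw{}. The square tiling theorem for planar maps then represents \G{} inside a cylinder $C = \R/\ell\Z \times [a,b]$: each edge $e=uv$ is assigned a square whose side length is the potential drop $|h(u)-h(v)|$, which under unit resistance also equals the current $|I(e)|$; each vertex becomes a horizontal segment and each face a vertical segment, and the squares tile $C$ with pairwise disjoint interiors. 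I would take $h$, the vertical coordinate, as the candidate \Dhf{}.

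For harmonicity, note that at each vertex $v$ the squares meeting its horizontal segment from above and from below must have equal total width; this balance is precisely \knl{} for $I$ at $v$, which by \Olaw{} says that $h(v)$ is the average of the values at its neighbours. Hence $h$ is harmonic at every vertex of \G{} --- the only ``sources'' sit at the two ends of $C$, that is, at infinity --- and $h$ is moreover bounded, taking values in $[a,b]$. Finiteness of the Dirichlet energy is then immediate from the tiling: the square of $e$ has area $(h(u)-h(v))^2$, so the energy $\sum_{e=uv}(h(u)-h(v))^2$ equals the total area of the tiling, which is at most $\ell(b-a)<\infty$. Thus $h$ automatically has finite Dirichlet energy.

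It remains to show $h$ is non-constant, equivalently that the cylinder has positive height $b-a$, and this is the step that genuinely requires UK-transience. In the bounded-degree case, standard resistance estimates keep both the circumference $\ell$ and the height $b-a$ bounded away from $0$ as one passes to the limit of the finite square tilings along an exhaustion of \G{}. Without a degree bound the tiling can degenerate, the vertex segments or the squares collapsing so that all potential differences vanish and $h$ becomes constant; this is exactly what happens for the weighted ray described in the introduction, which is transient and yet Liouville. I expect UK-transience to be precisely the uniform control that forbids this: it should supply a lower bound on the ``width'' of the flow that is stable under the exhaustion, forcing $\ell>0$ and $b-a>0$ in the limit.

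Accordingly, the \emph{crux} of the argument is this limiting step. I would exhaust \G{} by finite subnetworks, build the finite square tiling for each (a classical construction), and argue that these approximating tilings converge to a genuine square tiling of a cylinder $C$. The hard part is verifying that the UK-transience bound survives the passage to the limit, so that the limiting cylinder is non-degenerate; once positivity of $b-a$ is secured, the three properties above identify $h$ as the desired \Dhf{}. I anticipate that controlling this convergence, and translating the UK-transience hypothesis into the required quantitative non-collapse of the flow, will absorb the bulk of the technical work.
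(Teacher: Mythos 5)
Your proposal has a genuine gap, and it occurs at the very first step rather than at the limiting step you identify as the crux. The square tiling of a transient plane graph is built from the \rw\ flow out of a \emph{single root vertex} $o$, which occupies the top circle of the cylinder; the height coordinate $h$ is (up to normalisation) the probability of hitting $o$, so it is harmonic at every vertex \emph{except} $o$, where it attains its maximum and is therefore not harmonic. There is no ``flow between the two ends'' to base the tiling on: a UK-transient planar map can be one-ended (e.g.\ any non-amenable triangulation, which is exactly the case \autoref{nonam_UK-trans_intro} is after). And the existence of a finite-energy function harmonic off one vertex is just a restatement of transience (it is the Green function) and does not imply $G\notin{\Ocal}_{HD}$ --- $\Zbb^3$ has such a function but no non-constant \Dhf. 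So planarity and UK-transience must enter in an essentially different way, and the step ``harmonic off the sources $+$ finite energy $\Rightarrow$ \Dhf'' cannot be repaired as stated. Relatedly, you misdiagnose where unbounded degree hurts: for \emph{any} uniquely absorbing transient plane graph the cylinder already has positive height (its height is the effective resistance from $o$ to infinity), so non-degeneracy of the limit tiling is not the obstruction.

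The paper's route is quite different. UK-transience is used to transfer transience to the roundabout graph \Gr\ (equivalently the \plg\ \Gd), which has bounded degree \emph{by construction} --- this is the whole point of the definition, and it is invariant under planar duality since $\Gd=\Gdd$. The square tiling is then applied to \Gd, not to $G$: near four well-separated meridians one extracts four sub-flows with disjoint supports (\autoref{4trans_graphs}), combines them pairwise into two finite-energy flows, one living in \Gr\ and one in \Grd, whose supports cross in exactly one edge. Finally, a duality/effective-conductance criterion for ${\Ocal}_{HD}$ (\autoref{flow_pot_char}, via \autoref{flow_flow_char_roundabout}) converts the crossing pair of flows into a non-constant \Dhf: one flow certifies that two disjoint subgraphs are transient, and the dual of the other certifies that the effective conductance between them is finite. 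If you want to pursue a square-tiling proof in the spirit of Benjamini--Schramm, the harmonic function should come from this kind of flow/potential duality (morally the \emph{horizontal} coordinate of the tiling), not from the vertical coordinate.
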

A \defi{planar map} \G, also called  a \defi{plane graph}, is a graph endowed with an embedding in 
the plane.
The \defi{roundabout graph} $\mathring{G}$ is obtained from \g by replacing each vertex $v$ with a 
cycle $\mathring{v}$ in such a way that the edges incident with $v$ are incident with distinct 
vertices of $\mathring{v}$ (of degree 3), preserving their cyclic ordering; see also 
\autoref{secUKtr}.
We say that \g is \defi{roundabout-transient} if $\mathring{G}$  is transient.\footnote{The authors 
coined this term in Warwick, UK, where there are many roundabouts.} In \autoref{secUKtr} we relate 
\Gr\ 
with circle packings of $G$. We show that roundabout-transience implies transience in Lemma~\ref{roundabout_rem}.

\begin{eg}
The aforementioned example of a 1-way infinite path with the $n$-th edge replaced by $2^n$ edges, is 
transient, but not roundabout-transient. Indeed, each roundabout $\mathring{v}$ contains a cut 
consisting of just two edges 
separating the root from infinity. Thus the effective resistance to infinity is  infinite in the 
roundabout graph, and Lyons' criterion (\autoref{lyons}) implies recurrence. 
\end{eg}

We also provide a further way to strengthen the transience condition so as to guarantee Dirichlet 
harmonic functions.
The idea is to require that there is a flow $f$ from some vertex having not only finite Dirichlet 
energy, 
as required by Lyons' criterion, but also a finite norm in a different Hilbert space, obtained by 
giving weights to the edges depending on the degrees of their end-vertices. This is made precise in 
the following corollary of  \autoref{thm:UK-trans}, which we deduce in \autoref{sec9}.

\begin{cor}\label{supersuper-trans_mod}
Let $G$ be a locally finite planar graph such that there is a flow $f$ from 
some vertex $x$ such that 
$${\sum_{vw\in E(G)} [deg(v)^2+deg(w)^2]f(vw)^2}< \infty.$$
Then $G$ admits a non-constant Dirichlet harmonic function.
\end{cor}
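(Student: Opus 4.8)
The plan is to deduce the corollary from \autoref{thm:UK-trans} by showing that the hypothesis forces $G$ to be UK-transient, i.e.\ that its roundabout graph \Gr\ is transient. I will use the standard flow criterion for transience: a locally finite connected graph is transient if and only if it carries a flow of finite Dirichlet energy and positive intensity from some vertex to infinity. Thus it suffices to lift the given flow $f$ on $G$ to a finite-energy flow on \Gr\ (working in the connected component of $v$, which carries the support of $f$).

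To build the lift, recall that \Gr\ replaces each vertex $v$ of degree $d=\deg(v)$ by a cycle $\mathring{v}$ on $d$ vertices of degree $3$, with the $d$ edges formerly at $v$ becoming ``spokes'' attached to distinct cycle vertices, one per vertex, in the inherited cyclic order. I would define the lifted flow $\mathring{f}$ by (i) assigning to each spoke the value $f(e)$ of the corresponding edge $e$ of $G$, and (ii) routing flow around each cycle $\mathring{v}$ so as to restore Kirchhoff's node law at the degree-$3$ vertices. Concretely, if $u_1,\dots,u_d$ are the cycle vertices in order and $s_i$ is the signed spoke flow into $u_i$, then setting the flow on the cycle edge leaving $u_i$ to the partial sum $c_i=\sum_{j\le i}s_j$ (with the free circulation constant chosen to be $0$) satisfies node balance at each non-source $u_i$; consistency around the cycle is exactly $\sum_i s_i=0$, which is Kirchhoff's law at $v$ in $G$. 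At the single source vertex this balance fails by the intensity $1$, and we simply declare the corresponding cycle vertex to be the source of $\mathring{f}$ in \Gr.

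The crux is the energy estimate, and this is where the weight $\deg(v)^2+\deg(w)^2$ enters precisely. The spoke contribution to the energy of $\mathring{f}$ is $\sum_{e\in E(G)}f(e)^2$, the ordinary Dirichlet energy of $f$, which is finite since each weight is at least $2$. For the cycle edges at $v$, each $c_i$ is a sum of at most $d$ terms $s_j$, so the triangle inequality together with Cauchy--Schwarz gives
\[
\sum_{i=1}^{d}c_i^2 \;\le\; d\Bigl(\sum_{j=1}^{d}|s_j|\Bigr)^2 \;\le\; d^2\sum_{j=1}^{d}s_j^2 \;=\; \deg(v)^2\!\!\sum_{e\ni v}f(e)^2 .
\]
Summing over all $v$ and regrouping the double sum over incidence pairs $(v,e)$ into a sum over edges yields $\sum_{vw\in E(G)}[\deg(v)^2+\deg(w)^2]f(vw)^2$, which is finite by hypothesis (a single extra bounded contribution from the source cycle does not affect finiteness). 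Hence $\mathring{f}$ has finite total energy and positive intensity, so \Gr\ is transient.

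Finally, $G$ is locally finite and planar, so \Gr\ is a locally finite planar map, and the above shows it is transient---i.e.\ $G$ is UK-transient. \autoref{thm:UK-trans} then provides a \Dhf\ on $G$, which is the desired non-constant Dirichlet harmonic function. I expect the main obstacle to be the energy bound for the cyclic routing: one must verify that balancing a unit of imbalance distributed over a length-$\deg(v)$ cycle costs at most a $\deg(v)^2$ factor in energy, which is exactly what the crude partial-sum argument delivers and what matches the weight in the hypothesis.
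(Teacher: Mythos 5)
Your proposal is correct and follows essentially the same route as the paper: lift $f$ to the roundabout graph \Gr\ by keeping $f$ on the spoke edges and routing the resulting imbalance around each roundabout cycle, bound each cycle edge by $\sum_{e\ni v}|f(e)|$, apply Cauchy--Schwarz to get the $\deg(v)^2$ weight, and conclude UK-transience so that \autoref{thm:UK-trans} applies. The only (immaterial) difference is that you route the cycle flow by explicit partial sums with circulation constant $0$, whereas the paper takes the minimal-energy solution of the Dirichlet problem on each roundabout; both yield the same per-edge bound and the same energy estimate.
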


As shown in \autoref{grid_minor}, the order of magnitude of the weights here is best-possible. 
Hence \autoref{supersuper-trans_mod} is tight, which indicates a way in which 
\autoref{thm:UK-trans} is tight too. We prove it in Section~\ref{sec9} (see Corollary~\ref{super-trans}).

Our work was partly motivated by a problem from \cite{planarPB}, asking whether every simple planar 
graph with the Liouville property is (vertex-)\defi{amenable},\footnote{For bounded degree graphs, 
vertex-non-amenability and the related notion 
of edge-non-amenability agree. For graphs with unbounded degrees like ours this is no longer the 
case, 
and we always mean vertex-non-amenable when writing non-amenable.} by which we mean that for every 
$\epsilon>0$ there is a finite set $S$ of vertices 
of \g such that less than $\epsilon |S|$ vertices outside $S$ have a neighbour in $S$. As we show 
in 
\autoref{secApps},
\begin{thm}\label{nonam_UK-trans_intro}
 Every locally finite non-amenable planar map is roundabout-transient.
\end{thm}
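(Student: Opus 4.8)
The plan is to prove that the roundabout graph $\Gr$ satisfies a strong (linear) edge-isoperimetric inequality: there is a constant $c>0$ with $|\partial_E T|\ge c|T|$ for every finite vertex set $T\se V(\Gr)$, where $\partial_E T$ is the set of edges of $\Gr$ with exactly one endvertex in $T$. Since $\Gr$ is cubic (each roundabout vertex lies on two cycle edges and one spoke, up to degenerate roundabouts at vertices of degree $\le 2$), it has bounded degree, and a bounded-degree graph with positive Cheeger constant is transient: its simple random walk has spectral radius below $1$, a classical fact (Dodziuk, Kaimanovich; see also Woess's book). This yields transience of $\Gr$, i.e.\ UK-transience of $G$. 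I assume $G$ is simple; the general case of planar maps requires only cosmetic changes to the constants, or a standard reduction.

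Fix a finite $T\se V(\Gr)$. For $v\in V(G)$ put $T_v:=T\cap\mathring{v}$, and call $v$ \emph{full} if $T_v=\mathring{v}$ and \emph{partial} if $\emptyset\ne T_v\subsetneq\mathring{v}$; let $S,P$ be the corresponding vertex sets and $W:=S\cup P$. The edges of $\Gr$ split into \emph{cycle edges} and \emph{spokes} (the edges of $G$). Each partial roundabout is a cycle on which $T_v$ is a proper nonempty subset, so it contributes at least two cycle edges to $\partial_E T$; as roundabouts are edge-disjoint, $|P|\le\tfrac12|\partial_E T|$. The decisive step is a counting identity for the spokes: identifying the vertices of $\Gr$ with the incidences (vertex, incident edge) of $G$, each edge $e=vu$ of $G$ contributes to $|T|$ exactly the number of its two incidences lying in $T$. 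Summing over $e$ gives $|T|=2\,\#\mathrm{II}+\#\partial_{\mathrm{sp}}T$, where $\mathrm{II}$ is the set of edges of $G$ both of whose incidences lie in $T$ and $\partial_{\mathrm{sp}}T\se\partial_E T$ is the set of boundary spokes; in particular $|T|\le 2\,\#\mathrm{II}+|\partial_E T|$.

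Both endpoints of any edge in $\mathrm{II}$ lie in $W$, so $\mathrm{II}$ is the edge set of a planar subgraph of $G$ on at most $|W|$ vertices, and Euler's formula gives $\#\mathrm{II}\le 3|W|=3(|S|+|P|)$. This is exactly where planarity enters, and it is what tames large partial roundabouts: a long arc $T_v$ inside a high-degree roundabout is harmless because its internal spokes are bounded \emph{globally} via planarity, not roundabout by roundabout. It remains to bound $|S|$, and here non-amenability is used. Let $\epsilon_0>0$ satisfy $|\partial_V^G S'|\ge\epsilon_0|S'|$ for all finite $S'$. For $u\in\partial_V^G S$ choose a $G$-neighbour $w\in S$; the spoke $wu$ has its $w$-incidence in $T$, and since $u\notin S$ either its $u$-incidence lies outside $T$ (so $wu$ is a boundary spoke) or $u$ is partial. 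Distinct $u$ give distinct boundary spokes, so $|\partial_V^G S|\le|P|+|\partial_E T|\le\tfrac32|\partial_E T|$ and hence $|S|\le\tfrac{3}{2\epsilon_0}|\partial_E T|$.

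Combining, $|W|\le(\tfrac{3}{2\epsilon_0}+\tfrac12)|\partial_E T|$, whence $|T|\le 6|W|+|\partial_E T|\le(\tfrac{9}{\epsilon_0}+4)|\partial_E T|$, the required inequality with $c=(\tfrac{9}{\epsilon_0}+4)^{-1}$. The main obstacle—the only genuinely delicate point—is controlling roundabouts that are only partially inside $T$ yet contain very long arcs, whose sizes are unbounded in terms of their local cycle-boundary; the identity $|T|=2\,\#\mathrm{II}+\#\partial_{\mathrm{sp}}T$ together with the planar bound $\#\mathrm{II}\le 3|W|$ circumvents this by accounting for internal spokes globally. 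A minor technicality is the treatment of vertices of degree $\le 2$, whose roundabouts are degenerate cycles; these contribute only a bounded correction and do not affect the argument.
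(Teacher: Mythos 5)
Your argument is correct and is essentially the paper's own: the paper likewise proves that non-amenability of $G$ passes to the roundabout graph by splitting the roundabouts meeting the given set into fully and partially contained ones, bounding the internal (spoke) edges via Euler's formula applied to the planar subgraph of $G$ on those vertices, and applying the Cheeger constant of $G$ to the fully contained roundabouts, before invoking a classical bounded-degree criterion to get transience (the paper cites Benjamini--Schramm's non-amenable subtree, you cite the spectral-radius bound --- both standard and valid since the roundabout graph is cubic). Your incidence-counting identity $|T|=2\,\#\mathrm{II}+\#\partial_{\mathrm{sp}}T$ together with the edge-boundary formulation replaces the paper's case distinction on whether $|X|\ge 12|\overline X|$ and on the fraction $\epsilon$ of partial roundabouts; this is a tidier way of doing the same bookkeeping, and your treatment of the degenerate roundabouts at vertices of degree at most $2$ is indeed only a matter of constants.
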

Combining this with \autoref{thm:UK-trans} yields a positive answer to the aforementioned problem, 
and much more. This strengthens a result of Northshield \cite{NorGeo}, stating that every bounded 
degree non-amenable planar graph admits non-constant bounded 
harmonic functions, in two ways: it relaxes the bounded degree condition, and provides Dirichlet 
rather than bounded harmonic functions. 

Benjamini \cite{BenCoa} constructed a bounded degree non-amenable graph with the  Liouville 
property. The last result shows that such a graph cannot be planar even if we drop the bounded 
degree assumption.

\medskip
We think of Theorems~\ref{thm:UK-trans} and~\ref{nonam_UK-trans_intro} as indications that the 
notion of roundabout-transience is satisfied in many cases, and has strong implications. We expect 
it to find further applications. For example, we expect that the results of \cite[Section 2]{NorCir} 
generalise from bounded-degree 
non-amenable planar maps to roundabout-transient ones. Moreover, one could try to extend the main 
results of \cite{planarPB} and  \cite{ABGN}, which identify the Poisson boundary of planar graphs 
with the 
boundary of the square tiling, and the circle packing respectively, from the bounded-degree 
transient case to the roundabout-transient case, as we did in this paper for the result of Benjamini 
\& Schramm on Dirichlet harmonic functions. Finally, perhaps the most interesting problem of this 
form is the following:
\begin{problem} \label{probl}
Let \g be the 1-skeleton of a triangulation of an open disc in $\R^2$. Is it true that \g admits a 
circle packing 
in the unit disc if and only if it has a roundabout-transient subgraph\footnote{Here we follow the 
convention that subgraphs of plane graphs are endowed with the induced embedding, and are also 
plane graphs.}?
\end{problem}
If true, this would extend a well-known theorem of He \& Schramm \cite{HeSchrHyp}, stating that if 
\g is recurrent, then it admits a circle packing whose carrier is the whole plane, and if it is 
transient and has bounded degrees, then it admits a circle packing in the unit disc. (It is known 
that every 1-skeleton of a triangulation of an open disc admits a circle packing in either the whole plane or the unit disc, but not in 
both \cite{HeSchrFix,HeSchrHyp,SchraRig}.) The reason why we do not conjecture that \g admits a 
circle packing in the unit disc if and only if it is roundabout-transient itself in 
Problem~\ref{probl}, is that given any circle packing in the unit disc, it is always possible to 
insert enough new discs to make the contacts graph roundabout-recurrent. We leave this as an 
exercise for the interested reader.
\medskip

\begin{figure}%[h!]
%\vspace*{-3cm}
%\begin{overpic}[width=\linewidth,grid,tics=10]{az}
% sphere generated from matlab % sphere; cpmmand

\centering 
\begin{overpic}[width=.5\linewidth]{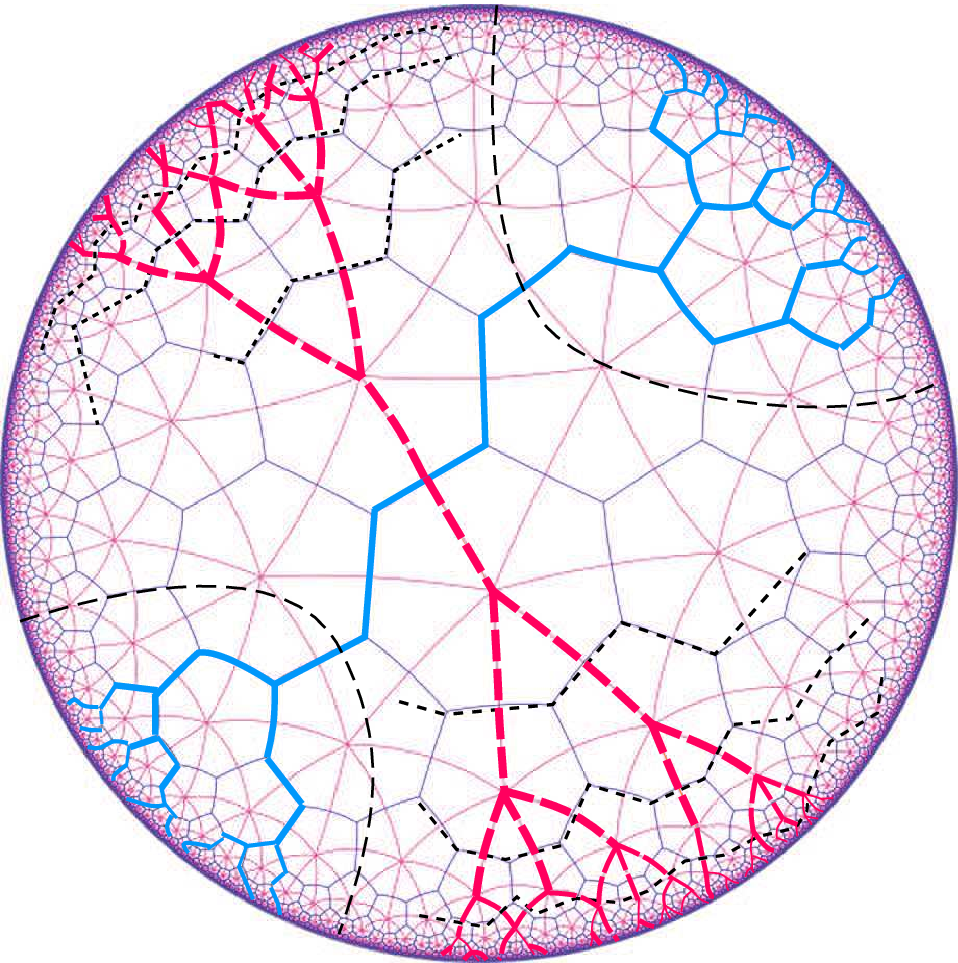} 
\put(22,23){$T_1$}
\put(75,70){$T_2$}
\end{overpic}
 \caption{\small A tesselation of the hyperbolic plane by 7-gons, depicted in black, and its dual 
graph, depicted in light purple. In blue we colour the edges in the support of a flow in the 
tesselation, in red we colour the edges in the support of a flow in the dual. 
The two subgraphs $T_1,T_2$ delimited by the dashed smooth curves are transient because of 
the blue flow. The dual of the red flow (indicated by dashed non-smooth paths) witnesses the fact 
that the effective 
conductance between $T_1$ and $T_2$ is finite because this dual flow has finite energy.} 
\label{figfh}

\end{figure}

We now give an overview of the proof of \autoref{thm:UK-trans}. 
As shown in \cite{C:harmonic}, a graph admits non-constant \Dhf s if and only if it has two 
disjoint 
transient subgraphs $T_1,T_2$ such that the effective conductance between $T_1$ and $T_2$ is 
finite; 
see \autoref{flow_pot_char}. To show that our graphs satisfy this condition, we start with a flow 
provided by Lyons' criterion. This flow lives in an auxiliary graph 
which for the purposes of this illustration can be thought of as a superimposition of $G$ and its 
dual. We split this flow into four sub-flows, supported in disjoint regions of the plane, using the 
square tiling techniques of 
\cite{planarPB}.  We use two  of these subflows to obtain $T_1,T_2$, and we apply a duality argument 
 to the 
other two to show that the effective conductance between $T_1$ and $T_2$ is finite; see 
\autoref{figfh} and  \autoref{4trans_graphs}. 
The latter step can be thought of as a  variation of the idea that the effective resistance from 
the 
top to the bottom of a rectangle equals the effective conductance  from left to right, with the 
latter two subflows showing finiteness of the 
top-to-bottom effective resistance. 
 
\medskip
The idea of handling unbounded-degree graphs by first applying a transformation into a bounded-degree graph ---in our case, the roundabout graph--- also appears in \cite{GurNachRec}, where the  transformation used is the `star-tree transform'.

Sections~\ref{prelims}--\ref{secUKtr} contain definitions and preliminaries about graphs and random walks, harmonic functions, and Roundabout-transience respectively. The two crossing flows of the above sketch are constructed in Section~\ref{sec ST}, after an introduction into square tilings which are used as a tool. In Section~\ref{sec Hfpg} we obtain a general criterion for the existence of non-constant Dirichlet harmonic functions in planar graphs, and use it to prove \autoref{thm:UK-trans} in Section~\ref{sec proof}. We deduce \autoref{nonam_UK-trans_intro} in Sections~\ref{secApps}--\ref{sec9}.

\section{Preliminaries}\label{prelims}

\subsection{Graphs} \label{secGraphs}

We follow the terminology of \cite{diestelBook05noEE} for graph-theoretic terms unless otherwise 
stated.
A \defi{graph} $G$ is a pair $(V,\arE)$ where $V$ is 
the set of vertices of $G$, and $\arE$ is a set of directed pairs of elements of $V$, called the 
(directed)
\defi{edges} of \G. (Although we are studying undirected graphs, we follow the latter convention for convenience in dealing with flows.)

All our graphs are \defi{simple}:  they have no loops or  parallel edges. (In the few occasions 
where we 
contract edges, one can subdivide any resulting parallel edges or loops to stay within the class of 
simple graphs.) 
A graph is \emph{locally finite} if all its vertices have finite \defi{degree}, where the degree of 
a vertex is the number of incident edges. Most graphs in this paper 
are locally finite. 
A locally finite graph $G$ is \emph{1-ended} if for every finite vertex $S$, the graph $G-S$ 
(obtained from $G$ by deleting the vertices in $S$ and their incident edges) has 
only one infinite component.
Given a vertex set $X$, by $E(X)$ we denote those edges with both endvertices in $X$. 

A \emph{cut} of a graph $G$ is the set of edges between a set of vertices $U\subseteq V(G)$ and its 
complement $V(G) \sm U$.

\subsection{Plane graphs} \label{secPlane}

A graph is \defi{planar}, if it admits an embedding in 
the plane $\R^2$. A \defi{plane graph} is a (planar) graph endowed with a fixed embedding in 
the plane. We will be using the notion of the dual of a plane graph in the standard sense, but we 
adapt it to our directed graphs so that the directions of the edges of the primal determine the 
directions of the edges of the dual as follows.
The \defi{dual} of a plane (directed) graph $G=(V,\arE)$ is the graph $G^*=(F,\arE^*)$ 
whose vertex set is the set $F$ of faces of $G$, having an edge $e^*$ from a face $v$ to a face $w$ 
whenever \g has an edge $e$ incident with both $v$ and $w$ \st\ $v$ lies on the right of $e$ as we 
move along the direction of $e$. Note that by drawing the vertices of $G^*$ inside the corresponding 
faces of \g we can obtain an embedding in $\R^2$ \st\ $G^{**}=G$. (To be more precise, $G^{**}$ is $G$ with all edge directions reversed.)
To simplify notation we will, with a slight abuse, suppress the bijection ${\cdot}^*$ between the 
edge sets $\arE,\arE^*$ of two dual 
plane graphs and pretend that $\arE=\arE^*$.

We will be using the following simple fact about plane dual graphs. A \emph{bond} is a minimal 
nonempty cut; that is, a cut that does not include any other nonempty cut. 
\begin{lem}[{\cite[Proposition~4.6.1]{diestelBook05noEE}}] \label{lf bonds}
Let $G$ and $G^*$ be dual plane graphs, and suppose they are both locally finite. Then every finite  bond 
$b$ of \g forms a cycle $C$ in $G^*$ such that one of the components of $G - b$ lies in the 
interior  of $C$ and the 
other in its exterior.\footnote{The statement that the components of $G - b$ lie in distinct sides 
of 
$C$ is given in the proof of \cite[Proposition~4.6.1]{diestelBook05noEE} rather than in its 
statement. Although the latter is assuming the graphs to be finite, it can be easily adapted to our setup by considering an appropriate finite subgraph of $G$ containing $b$.}
\end{lem}

\subsection{Electrical currents} \label{secENB}

Given a graph $G=(V,\arE)$ and a function $i: \arE \to \R$, the {divergence} $i^*(x)$ of $i$ at a 
vertex $x$ is the net flow out of $x$, that is, \\
$i^*(x):=\sum_{xy\in \arE} i(xy) - \sum_{zx\in \arE} i(zx)$.
We say that $i$ satisfies \defi{\knl} at $x$ if  $i^*(x)=0$. 

A  \emph{divergence free flow} is a function $i: \arE \to \R$ satisfying \defi{\knl} at every 
vertex. 
In an infinite graph it is possible for $i$ to satisfy \knl\ at all vertices except 
a single vertex $o$, at which we have $i^*(o)\neq 0$. In this case $i$ is called a \defi{flow from} 
$o$ (to infinity). The \defi{intensity} of $i$ is the divergence $i^*(o)$. 
For a finite vertex-set $A$, we say that $i$ is a  \defi{flow from $A$} if $i^*(x)>0$ \fe\ $x\in A$ 
and $i^*(x)=0$ \fe\ $x\not\in A$. The \defi{support} $supp(i)$ of $i$ is the edge set $\{e\in \arE 
\mid i(e)\neq 0 \}$.

A \defi{potential} on  $G$ is a function $u: V\to \R$. The difference operator $\partial$ turns each 
potential $u: V\to \R$ into a function $\partial u: \arE \to \R$ by letting $\partial u(xy) := 
u(x)-u(y)$. 
If $\partial u$ satisfies Kirchhoff's node law, then $u$ satisfies 
the discrete Laplace equation:\footnote{This can be seen by solving the equation  $\sum_{y\in  
N(x)}(u(x) - u(y)) =0$ for 
$u(x)$. }
\labtequ{harm}{$u(x) = \frac{\sum_{y\in  N(x)} u(y)}{deg(x)}$,}
where $N(x)$  denotes the set of neighbours of $x$, and $deg(x)$ the cardinality of $N(x)$. 
If $u$ satisfies \eqref{harm}, then we say that $u$ is \defi{harmonic} at 
the vertex $x$. Note that the above implication can be reversed to yield that if a potential $u$ is 
harmonic, then $\partial u$ satisfies Kirchhoff's node law.

A potential $u: V\to \R$ is \defi{harmonic} if it is harmonic at every vertex $x\in V$. 
The (Dirichlet) \defi{energy} of a function  $i: \arE \to \R$ is defined by
$${\cal E}(i):= \sum_{e\in \arE} i^2(e).$$

The \emph{energy of a potential} $u$ is the energy of $\partial u$; in formulas:
${\cal E}(u):= \sum_{xy\in \arE}  \left(u(x)-u(y)\right)^2$. 
A  harmonic function with finite Dirichlet energy is called a \defi{Dirichlet harmonic function}. 
A graph has the \emph{Liouville Property} if all of its bounded harmonic functions are constant. It 
is well-known that the Liouville Property implies that all Dirichlet harmonic function are constant 
(indeed, if there is a non-constant Dirichlet harmonic function, then the free-current and the 
wired current do not agree and their difference is a (non-constant) \emph{bounded} Dirichlet 
harmonic function, see \cite{LyonsBook} for details). 

\medskip

A \defi{walk} in a graph \g is a sequence $\{v_0, e_1, \ldots e_k, v_k\}$ alternating between 
vertices and incident edges, 
starting and ending with a vertex. A walk is \defi{closed} if its starting vertex $v_0$ is equal to 
its 
ending vertex $v_k$.  Given a function $i: \arE \to \R$ and a closed walk $W=\{v_0, e_1, \ldots e_k, 
v_k\}$, we define $curl_i(W):= \sum_{j\leq k} (-1)^{\delta_j} i(e_j)$, where $\delta_j=1$ if $W$ 
traverses $e_j$ against its direction, and $\delta_j=0$ otherwise.
We say that $i$ satisfies \emph{Kirchhoff's} cycle law if $curl_i(W)=0$ for every closed walk $W$ 
in $G$ (equivalently, if $curl_i(W)=0$ for every closed walk $W$ visiting no vertex 
-- other than its starting vertex -- more than once).
It is not hard to check that 

\begin{obs} \label{kcl pot}{A flow $i$ satisfies Kirchhoff's cycle law if and only if there is a 
potential $u$ with $i=\partial u$.}
\end{obs}

\subsection{Random walks} \label{secRWB}
All random walks in this paper are simple and take place in discrete time, that is, if the 
random walker is at a vertex $v$ of our graph $G$ at time $n$, then at time $n+1$ it is at a 
neighbour of $v$ chosen uniformly at random. The 
starting vertex of our random walk will always be deterministic, and usually denoted by $o$.

A connected graph \g is \defi{transient} if random walk on \g almost surely visits any fixed 
vertex finitely often. If \g is not transient then it is \defi{recurrent}. The following classical 
result of T.~Lyons characterises 
transience in terms of flows.
\begin{thm}[\cite{lyons}, see also \cite{LyonsBook}]\label{lyons}
A connected locally finite graph \g is transient \iff\ for some (and hence for every) vertex $o\in 
V(G)$,  there is a flow from $o$ in  \g with finite energy.
\end{thm}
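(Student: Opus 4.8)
The plan is to translate the statement into the language of electrical networks, assigning unit resistance to every edge so that the simple \rw\ becomes the associated reversible Markov chain. Fix an exhaustion $V_0 \se V_1 \se \cdots$ of $V(G)$ by finite vertex sets with $o\in V_0$ and $\bigcup_n V_n = V(G)$, and for each $n$ form the finite network $G_n$ obtained from the subgraph induced on $V_n$ by identifying all vertices outside $V_n$ into a single vertex $z_n$, keeping boundary edges as (possibly parallel) unit-resistance edges. Writing $R_n$ for the effective resistance between $o$ and $z_n$ in $G_n$, Rayleigh monotonicity gives that $R_n$ is non-decreasing, so $R_n \uparrow R_\infty \in (0,\infty]$. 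The first standard ingredient I would record is the probabilistic identity that in $G_n$ the probability that the walk from $o$ reaches $z_n$ before returning to $o$ equals $1/(d(o)R_n)$; letting $n\to\infty$, these events decrease to the event of never returning to $o$, so the escape probability equals $1/(d(o)R_\infty)$. Hence \g is transient \iff\ $R_\infty<\infty$.

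The second ingredient is Thomson's principle: in each finite $G_n$ the unit current $i_n$ from $o$ to $z_n$ minimises the energy $E(\cdot)$ among all unit flows from $o$ to $z_n$, and its minimal energy equals $R_n$. For the direction asserting that a finite-energy flow forces transience, suppose $i$ is a flow from $o$ with $\partial i(o)=1$ and $E(i)<\infty$. Restricting $i$ to the edges met by $V_n$ and routing the boundary edges into $z_n$ yields a unit flow from $o$ to $z_n$ in $G_n$ whose energy is at most $E(i)$ (no merging loss occurs if parallel boundary edges are retained); by Thomson's principle $R_n=E(i_n)\le E(i)$, so $R_\infty \le E(i)<\infty$ and \g is transient.

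For the converse, suppose \g is transient, so $\sup_n E(i_n)=R_\infty<\infty$. Extending each $i_n$ by zero outside $G_n$, we obtain unit flows from $o$ with uniformly bounded energy, that is, a bounded sequence in the Hilbert space of antisymmetric functions on $\arE$ equipped with the energy inner product. By weak compactness some subsequence $(i_{n_k})$ converges weakly to an antisymmetric $i$. Because \g is locally finite, each node-law functional $f\mapsto \partial f(x)=\sum_{y\in N(x)}f(\ded{xy})$ is bounded, hence weakly continuous; for fixed $x\neq o$ we have $\partial i_{n_k}(x)=0$ once $x$ lies in the interior of $V_{n_k}$, while $\partial i_{n_k}(o)=1$ for all $k$, so in the limit $i$ satisfies \knl\ everywhere except at $o$, with $\partial i(o)=1$. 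Finally, weak lower semicontinuity of the norm gives $E(i)\le \liminf_k E(i_{n_k})\le R_\infty<\infty$, so $i$ is the desired finite-energy flow from $o$.

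The equivalence of the condition for \emph{some} $o$ and for \emph{every} $o$ is then immediate: transience is a property of the connected graph independent of the basepoint, and if $i$ is a finite-energy unit flow from $o$, adding the unit flow supported on any finite $o'$--$o$ path (which has finite energy) produces a finite-energy unit flow from $o'$. I expect the main obstacle to be exactly the passage to the infinite network in the hard direction: one must ensure that the weak limit of the finite currents is genuinely a flow, with \knl\ and unit intensity surviving the limit, and that its energy does not increase in the limit. This is precisely where local finiteness (making the node-law functionals continuous) and weak lower semicontinuity of energy are needed; the careful bookkeeping of how an infinite flow restricts to a unit flow to $z_n$ in the easy direction is a second, milder technical point.
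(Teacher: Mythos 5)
The paper states this theorem as a known result of T.~Lyons and gives no proof of its own, only the citations. Your argument is correct and is essentially the standard proof found in those references: the exhaustion by finite networks with the escape-probability identity $1/(d(o)R_n)$, Thomson's principle for the easy direction, and weak compactness plus lower semicontinuity of the energy (with local finiteness making the node-law functionals weakly continuous) for the hard direction, so there is nothing to correct.
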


Given a transient graph \g and a vertex $o$, we can define a flow $i=i(o)$ from $o$ as 
follows. For every vertex $v\in V$, let $h(v)$ be the probability $p_v(o)$ that random walk from 
$v$ 
will ever reach $o$. In particular, $h(o)=1$. By construction the potential $h$ is harmonic at 
every $v\neq o$. Let $i= \partial h$. By our discussion in \autoref{secENB}, $i$ is a flow 
from $o$, and we call it the \defi{\rw\ flow} from $o$.

\section{Dirichlet harmonic functions}

In this section we explain some of the tools we use in our proofs. 
The following results characterise the locally finite graphs admitting
non-constant Dirichlet harmonic functions. We write ${\Ocal}_{HD}$ for the class of graphs on which 
all \Dhf s are constant.

\begin{thm}[\cite{C:harmonic}]\label{flow_pot_char}
A locally finite graph $G$ admits a
non-constant Dirichlet harmonic function if and only if there are two transient, 
vertex-disjoint, subgraphs $A, B$ of \g such that 
there is a potential  of finite energy which is constant on each of $A$ and $B$ but not on $A\cup 
B$.
\end{thm}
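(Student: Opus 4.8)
The plan is to carry out the argument inside the Hilbert space $\ell^2$ of antisymmetric finite-energy functions on $\arE$, with the inner product $\langle\phi,\psi\rangle=\sum_{e\in E}\phi(e)\psi(e)$ induced by the energy, and to exploit the orthogonal (Royden/Hodge) decomposition $\ell^2=\overline{\bigstar}\oplus\overline{\diamond}\oplus\mathbf{HD}$, where $\overline{\bigstar}$ is the closure of the gradients $\partial g$ of finitely supported functions $g$, $\overline{\diamond}$ the closure of the finite cycle space, and $\mathbf{HD}$ the space of gradients of \Dhf s. The whole point is that $G\notin{\Ocal}_{HD}$ \iff\ $\mathbf{HD}\neq 0$, so each implication reduces to detecting a nonzero $\mathbf{HD}$-component. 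I will repeatedly use that recurrence is inherited by subgraphs, so that transience of $A$ (or of $B$) already forces $G$ itself to be transient, and on a transient $G$ point evaluation $g\mapsto g(o)$ is continuous on $\overline{\bigstar}$ in the energy norm.

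For the forward implication, let $h$ be a non-constant \Dhf\ and pick vertices with $h(p)<h(q)$; set $\alpha:=h(p)$, $\beta:=h(q)$, $A:=\{v:h(v)\le\alpha\}$, $B:=\{v:h(v)\ge\beta\}$ (vertex-disjoint), and $\rho:=\min(\beta,\max(\alpha,h))$. Truncation does not increase energy, so $\rho$ has finite energy and is constant ($=\alpha$ on $A$, $=\beta$ on $B$) with $\alpha\neq\beta$. It remains to show $A,B$ are transient. Let $\theta_A$ be $\partial h$ restricted to the edges $E(A)$ of the induced subgraph on $A$ and zero elsewhere; it has finite energy $\le E(\partial h)$. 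Since $h$ is harmonic, $\theta_A$ is divergence-free at interior vertices of $A$, while at a boundary vertex $v$ every neighbour $w\notin A$ has $h(w)>\alpha\ge h(v)$, so $\partial\theta_A(v)=\sum_{w\sim v,\,w\notin A}(h(w)-h(v))\ge 0$, and $\sum_v\partial\theta_A(v)=\sum_{v\in A,\,w\notin A,\,v\sim w}(h(w)-h(v))>0$ because the $A$-to-$A\ct$ cut is nonempty. Thus $\theta_A$ is a finite-energy flow on the subgraph $A$ with everywhere-nonnegative divergence of strictly positive total; a recurrent graph admits no such flow (contrapositive of \autoref{lyons}, collapsing the source set to obtain a finite-energy flow to infinity), so $A$ is transient. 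The symmetric computation, where the cut current now leaves $B$, gives transience of $B$.

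For the reverse implication, let $A,B$ be transient and vertex-disjoint and $\rho$ of finite energy with $\rho\equiv\alpha$ on $A$, $\rho\equiv\beta$ on $B$, $\alpha\neq\beta$. I project $\partial\rho$ onto $\mathbf{HD}$; if the projection is nonzero it is the gradient of a non-constant \Dhf\ and we are done, so suppose it vanishes. Since $\partial\rho$ is a finite-energy gradient it lies in $\overline{\diamond}^{\,\perp}=\overline{\bigstar}\oplus\mathbf{HD}$, so a vanishing $\mathbf{HD}$-projection puts $\partial\rho\in\overline{\bigstar}$; that is, after subtracting a constant, $\rho$ is an energy-limit of finitely supported functions $\rho_n$. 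As $A$ is transient, \autoref{lyons} provides a finite-energy unit flow $f$ from some $o\in A$ to infinity inside the subgraph $A$; extend $f$ by zero to $G$. Evaluate $\langle\partial\rho,f\rangle$ in two ways: since $f$ is supported on $E(A)$, where $\partial\rho\equiv 0$, we get $\langle\partial\rho,f\rangle=0$; on the other hand the discrete Green identity gives $\langle\partial\rho_n,f\rangle=\sum_v\rho_n(v)\,\partial f(v)=\rho_n(o)$, and letting $n\to\infty$, using continuity of the inner product together with continuity of point evaluation at $o$ (valid because transience of $A$ forces $G$ transient), yields $\langle\partial\rho,f\rangle=\rho(o)=\alpha$. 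Hence $\alpha=0$, and the same argument applied to $B$ gives $\beta=0$, contradicting $\alpha\neq\beta$. So the projection is nonzero and a non-constant \Dhf\ exists.

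The step I expect to be the main obstacle is the ``behaviour at infinity'' bookkeeping underlying both directions. In the forward direction one must pass from a finite-energy flow with one-signed, possibly infinitely supported divergence to genuine transience of $A$, which requires care when the boundary cut is infinite (collapsing the sources while keeping the energy finite). In the reverse direction the delicate point is justifying the Green identity in the limit, namely that the approximants $\rho_n$ converge pointwise at $o$ and leave no residual flux at infinity when paired with $f$. Both are standard facts of discrete potential theory once $G$ is known to be transient, but they are precisely where the energy estimates must be made rigorous.
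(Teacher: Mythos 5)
The paper does not actually prove \autoref{flow_pot_char}: it imports it as a black box from \cite{C:harmonic}, so there is no in-paper argument to compare yours against. Your proof is a self-contained one via the Royden/Hodge decomposition, and it is very much in the spirit of the paper's own functional-analytic proof of the `if'-implication of \autoref{flow_pot_char_reform} (which likewise plays the orthocomplements of the cycle space and of the star space against each other via \autoref{hilbert_space_lem}). Your reverse implication is sound: $\partial\rho$ lies in the orthocomplement of the cycle space, so a vanishing $\mathbf{HD}$-projection forces $\partial\rho$ into the closure of finitely supported gradients, and pairing with unit flows out of $A$ and out of $B$ (legitimate summation by parts against finitely supported approximants, plus continuity of point evaluation on a transient graph) yields $\alpha=\beta$, a contradiction. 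The truncation construction of $\rho$ in the forward direction is also fine.

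The one genuine weak point is your justification that $A=\{h\le\alpha\}$ is transient. The claim is true, but the parenthetical route you offer --- ``collapsing the source set to obtain a finite-energy flow to infinity'' --- does not work when the boundary cut is infinite, which is exactly the case you flag as delicate. Collapsing (shorting) the source set $S$ shows only that $A/S$ is transient, i.e.\ that the effective resistance from the \emph{set} $S$ to infinity is finite; shorting an infinite vertex set can turn a recurrent graph into a transient one, so this gives no information about $A$ itself (and the total intensity of $\theta_A$ may moreover be infinite, so the collapsed object need not be a flow from a single source of finite intensity in the usual sense). The repair is standard but different: if $A$ were recurrent, the capacity characterisation of recurrence provides finitely supported functions $g$ with $0\le g\le 1$, $g(v_0)=1$ and $E(\partial g)$ arbitrarily small, where $v_0$ is a vertex of $A$ with a neighbour outside $A$; then
$\langle \partial g,\theta_A\rangle=\sum_v g(v)\,\partial\theta_A(v)\ge \partial\theta_A(v_0)>0$
because every term is nonnegative, while Cauchy--Schwarz bounds the left-hand side by $\sqrt{E(\partial g)\,E(\theta_A)}\to 0$ --- a contradiction. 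With that substitution (and the routine remarks that $A$ is infinite because a non-constant harmonic function cannot attain its infimum on $\{h\le\alpha\}$, and that one should pass to a transient component of $A$ if the induced subgraph is disconnected), your argument is complete.
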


\begin{obs}\label{obs1717}
 By adding a finite path to the subgraph $A$ in \autoref{flow_pot_char} if necessary, and adapting 
the values of the potential on that path, we may assume that 
in the statement of \autoref{flow_pot_char} we moreover have an edge joining a vertex
of $A$ to a vertex of $B$. %\qed
\end{obs}

The following is a variant of \autoref{flow_pot_char} that is more convenient for our purposes in 
this paper.

\begin{cor}\label{flow_pot_char_reform} 
A locally finite graph $G$ admits a
non-constant Dirichlet harmonic function if and only if it admits a divergence free flow $f$ and
a potential $\rho$,  both of finite energy, such that the supports of $f$ and 
$\partial \rho$ 
intersect in precisely one edge.
\end{cor}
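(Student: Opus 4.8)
The plan is to derive \autoref{flow_pot_char_reform} from \autoref{flow_pot_char} by translating between the data $(A,B,\rho)$ appearing there and the pair $(f,\rho)$ appearing here, proving each implication separately.

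\emph{From \autoref{flow_pot_char} to the pair $(f,\rho)$.} Suppose $A,B$ are transient and vertex-disjoint and $\rho_0$ is a finite-energy potential with $\rho_0\equiv a$ on $A$, $\rho_0\equiv b$ on $B$ and $a\neq b$. I would first take a shortest $A$–$B$ path $\pi$ in $G$; it is finite and internally disjoint from $A\cup B$, and I call its endpoints $x_0\in A$, $y_0\in B$. Using \autoref{lyons} fix finite-energy unit flows $g_A$ from $x_0$ to infinity inside $A$ and $g_B$ from $y_0$ to infinity inside $B$, and let $f_\pi$ be the unit flow along $\pi$ from $x_0$ to $y_0$. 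Set $f:=g_A+f_\pi+g_B$. Then $f$ is a finite-energy flow from the single vertex $x_0$ of positive intensity (at $y_0$ the inflow of $f_\pi$ is balanced by $g_B$), and $\supp(f)\subseteq E(A)\cup E(\pi)\cup E(B)$. For the potential, keep $\rho:=\rho_0$ off the internal vertices of $\pi$ and, on those finitely many vertices, set $\rho$ equal to $a$ on an initial segment of $\pi$ and to $b$ on the complementary final segment. Altering $\rho_0$ at finitely many vertices changes the energy by a finite amount, so $\rho$ still has finite energy; moreover $\partial\rho$ vanishes on $E(A)$ and $E(B)$, and on all edges of $\pi$ except the single edge $e_0$ separating the two segments. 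Hence $\supp(f)\cap\supp(\partial\rho)=\{e_0\}$, as required.

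\emph{From the pair $(f,\rho)$ to \autoref{flow_pot_char}.} Conversely, let $f$ be a finite-energy flow of positive intensity and $\rho$ a finite-energy potential with $\supp(f)\cap\supp(\partial\rho)=\{e_0\}$, $e_0=x_0y_0$. Let $H$ be the component of the support graph of $f$ containing $e_0$. Since $\partial\rho\equiv 0$ on $\supp(f)\setminus\{e_0\}$, the potential $\rho$ is constant on each component of $H-e_0$; because $\rho(x_0)\neq\rho(y_0)$, the edge $e_0$ is a bridge of $H$, and I take $A$ and $B$ to be its two sides. By construction $\rho$ is constant on $A$ and on $B$ with different values, so it only remains to prove that $A$ and $B$ are transient. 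Restricting $f$ to $A$ yields a finite-energy flow whose only imbalance, besides the finitely many sources of $f$ lying in $A$, is at $x_0$, where deleting $e_0$ contributes a net intensity $-f(\ded{x_0 y_0})\neq 0$; symmetrically $+f(\ded{x_0 y_0})$ appears at $y_0$ in $B$. If each of these restricted flows has nonzero net intensity escaping to infinity, \autoref{lyons} makes $A$ and $B$ transient, and \autoref{flow_pot_char} then yields $G\notin{\Ocal}_{HD}$.

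I expect the crux to be exactly this verification of transience on \emph{both} sides of the bridge $e_0$: one must rule out the degenerate possibility that the current $f(\ded{x_0 y_0})$ crossing $e_0$ is entirely cancelled on one side by the sources of $f$, leaving a finite (hence recurrent) pocket there. To control this I would first normalise $f$---reducing to a single source and, if necessary, rerouting so that no source of $f$ lies strictly between $e_0$ and infinity on either side---so that a nonzero net current escapes to infinity on each side of $e_0$. The opposite implication, by contrast, is essentially bookkeeping once one observes that concentrating the transition of $\rho$ onto a single edge of the connecting path $\pi$ costs only finite extra energy, since it amounts to changing $\rho_0$ at finitely many vertices.
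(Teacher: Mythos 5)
Your `only if' direction is correct and is essentially the paper's own argument spelled out: the paper simply asserts that one may assume $A$ and $B$ are joined by an edge $xy$ and that a finite-energy flow with $f(\ded{xy})\neq 0$ supported in $(A\cup B)+xy$ exists iff $A$ and $B$ are transient; your path $\pi$, the flows $g_A,g_B$, and the adjustment of $\rho_0$ on the interior of $\pi$ supply the missing details. (One cosmetic point: your $f=g_A+f_\pi+g_B$ has divergence $+2$ at $x_0$, which is admissible under the paper's definition of a flow, but the more natural choice --- reverse $g_A$ so that a unit current comes from infinity in $A$, crosses $e_0$, and escapes to infinity in $B$ --- yields a flow satisfying \knl\ at \emph{every} vertex, which is the version the paper actually uses downstream.)

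The gap you flag in the `if' direction is real, and your proposed repair (normalising or rerouting $f$ so that no source of $f$ is trapped on one side of $e_0$) cannot work, because under the paper's literal definition of a flow from a vertex $o$ the implication is false. Take $G=\Zbb^3$ with one pendant vertex $o$ attached to the origin $y_0$ by an edge $e_0$; let $f$ be a finite-energy unit flow from $o$ (one unit along $e_0$, then dispersing to infinity in $\Zbb^3$), and let $\rho$ be $0$ at $o$ and $1$ on $\Zbb^3$. Both have finite energy and $\supp(f)\cap\supp(\partial\rho)=\{e_0\}$, yet $G\in{\Ocal}_{HD}$, since any Dirichlet harmonic function on $G$ restricts to one on $\Zbb^3$ and is therefore constant. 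This is precisely your ``finite recurrent pocket'', with the pocket equal to $\{o\}$, and no rerouting of $f$ can remove it because the hypothesis is already satisfied as stated. The statement is only correct if $f$ is additionally required to satisfy \knl\ at every vertex (a sourceless, infinity-to-infinity flow). Under that hypothesis your own restriction argument closes: $f$ restricted to $A$ is then a flow from the single vertex $x_0$ of intensity $-f(\ded{x_0y_0})\neq 0$, which forces $A$ to be infinite and, by \autoref{lyons}, transient, and symmetrically for $B$. This is also exactly what the paper's independent functional-analytic proof of this direction silently assumes when it places $f$ in $D^\perp$, the orthogonal complement of the star space, and it is satisfied by the flows the paper later feeds into this corollary (those produced by \autoref{4trans_graphs} are tuned so that the node law holds everywhere). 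So keep your converse argument, but strengthen the hypothesis on $f$ accordingly; note also that the paper offers a second, genuinely different proof of this direction via the Hilbert-space decomposition into star and cycle spaces, whereas yours is a direct combinatorial argument.
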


\begin{proof}%[Proof of the `only if'-implication of \autoref{flow_pot_char_reform}]
To prove the forward implication, assuming that $G$ is not in ${\Ocal}_{HD}$, 
\autoref{flow_pot_char} and 
\autoref{obs1717}, yield transient 
vertex-disjoint subgraphs $A,B$, connected by an edge $e$, as well as a potential $\rho$ of finite 
energy which is constant on each of $A,B$ but takes different 
values on them. Using the transience of $A$ and $B$ and \autoref{lyons} it is straightforward 
to construct a divergence free flow $f$ of finite energy that is supported on the edges of $A \cup 
B$ and the 
edge $e$, with $f(e)\neq 0$. The supports of $f$ and  $\partial \rho$ then intersect only in the 
edge $e$ as desired.

\medskip
The backward implication can be shown using the methods of the proof 
of \autoref{flow_pot_char} in \cite{C:harmonic}.\footnote{More precisely, from the existence of $f$ 
and $\rho$ as in that theorem, one can construct transient subgraphs $A$ and $B$ as in 
\autoref{flow_pot_char}.} Here we take a different route; we will give a new 
functional analytic proof. 

%\begin{proof}[Proof of the `if'-implication of \autoref{flow_pot_char_reform}]

We consider the (real) Hilbert space $H$ of functions from $\arE(G)$ to $\R$ with finite Dirichlet 
energy; our scalar product is defined by\\ 
$\langle f\mid g \rangle:=\sum_{e\in \arE(G)}f(e)g(e)$.

The \defi{cycle space} $C$ of \g is the closed span of the subspace of $H$ generated by the cycles 
of \G; that is, for 
each cycle $C_i$ of \G, we let $f_i$ be a non-zero divergence free flow supported on the edges of 
$C_i$ ($f_i$ is determined by $C_i$ up to a multiplicative constant that does not matter), and let 
$C$ be the subspace of $H$ generated by all the $f_i$ under infinite convergent sums.

The \defi{star space} $D$ of \g is the closed span of the subspace of $H$ generated by the atomic 
cuts of \G:
for 
each vertex $v_i$ of \G, we let $a_i$ be the indicator on its incident edges, and let 
$D$ be the subspace of $H$ generated by all the $a_i$ under infinite convergent sums.

Note that $C$ and $D$ are orthogonal spaces, since each cycle satisfies Kirchhoff's first law. 
Moreover, every divergence free flow lies in $D^\perp$: it is straightforward to check that 
$f\in D^\perp$ \iff\ $f$ satisfies \knl\ at every vertex.
Furthermore,  $C^\perp$ coincides with the space $\{\partial u \mid u \text{ is a potential}\}$. 
Therefore, to show that \g admits a non-constant Dirichlet harmonic 
function, it suffices to show that $D^\perp\cap C^\perp$ is non-trivial, as all functions in 
$D^\perp$ satisfy \knl, and so their corresponding potentials are  harmonic by the discussion in 
\autoref{secENB}.

Let us apply these observations to the functions $f$ and $\rho$ of the statement. The assumption 
that  $f$ and 
$\partial \rho$ intersect in precisely one edge implies that $\langle f\mid \partial \rho \rangle 
\neq 0$. 

As $D$ is orthogonal to $C$, we have $C\subseteq D^\perp$, and so 
we can decompose $D^\perp$ as  $D^\perp  =C+(D^\perp\cap C^\perp)$. 
Thus we can write our $f\in D^\perp$ as $f_1+f_2$ with $f_1 \in D^\perp\cap C$ and $f_2\in 
D^\perp\cap C^\perp$. Since $\partial \rho \in C^\perp$, we have $\langle f_1\mid \partial \rho 
\rangle = 0$, and since $\langle f\mid \partial \rho \rangle \neq 0$ we must have $\langle f_2\mid 
\partial \rho \rangle \neq 0$. In particular, $f_2\neq 0$ and so we have proved our claim that $ 
D^\perp\cap C^\perp\ni f_2$ is non-trivial.
\end{proof}

This way we obtain an alternative proof of the following result of Soardi. 

\begin{cor}[\cite{SoardiBook}]\label{fin_sep}
Let $G$ be a locally finite graph with a finite cut $b$ such that $G-b$ has two transient 
components. Then $G$ is not in ${\Ocal}_{HD}$.
\end{cor}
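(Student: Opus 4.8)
The plan is to invoke \autoref{flow_pot_char}, which reduces the problem to producing two vertex-disjoint transient subgraphs together with a finite-energy potential that is constant on each of them but takes different values on them. The two transient components of $G-b$ are the obvious candidates for the subgraphs, so the entire argument comes down to exhibiting a suitable potential and checking its energy.

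First I would fix the two transient components $A$ and $B$ of $G-b$; being distinct components, they are automatically vertex-disjoint. The potential I would use is the indicator function $\rho=\mathbf 1_{V(A)}$, equal to $1$ on the vertices of $A$ and $0$ elsewhere. By construction $\rho$ is constant on $A$ (with value $1$) and constant on $B$ (with value $0$, since $B$ is disjoint from $A$), and it takes different values on the two, so the only remaining point is to verify that $\rho$ has finite energy.

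The key observation---and the one point that needs genuine justification---is that the edge boundary of $A$ in $G$ is contained in the cut $b$. Indeed, if some edge $uv$ had $u\in A$ and $v\notin A$ but $uv\notin b$, then $uv$ would survive in $G-b$ and join $u$ to $v$, forcing $v$ into the component $A$, a contradiction. Hence $\partial(\rho)$ is supported on a subset of $b$, so $E(\rho)=\sum_{xy\in E}(\rho(x)-\rho(y))^2\le |b|<\infty$. With $A,B$ transient and vertex-disjoint and $\rho$ the potential just described, \autoref{flow_pot_char} immediately yields $G\notin {\Ocal}_{HD}$.

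The main (and essentially only) obstacle is this containment of the edge boundary of $A$ inside $b$; once it is in hand, finiteness of the cut does all the remaining work and no flow has to be constructed by hand. I would also note that this argument does not require $A$ and $B$ to lie on opposite sides of the cut: we separate $A$ from \emph{everything} else rather than separating the two sides of $b$, so the finiteness of the boundary of the single component $A$ is all that matters.
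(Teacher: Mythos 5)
Your proof is correct and is essentially the paper's own argument: the paper likewise applies \autoref{flow_pot_char} with a potential that is constant on each component of $G-b$ and takes distinct values on the two transient components, and your choice $\rho=\mathbf 1_{V(A)}$ is a particular such potential. Your explicit verification that $\partial\rho$ is supported in $b$, so that $E(\rho)\le |b|<\infty$, just spells out the step the paper leaves implicit.
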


\begin{proof}
We apply \autoref{flow_pot_char}, with $\rho$ being e.g.\ the potential defined by $\rho(x)=i$ for 
every $x$ in $C_i$, where $C_i$ is the $i$th component of $G-b$ in some enumeration of those 
components.  %and assigns different values at two transient components of $G-b$.
\end{proof}

\begin{dfn} \label{def wit}
Given a locally finite graph $G$, and a subgraph $H\subseteq G$, we will say that a function $f: 
{E}(G) \to \R$ 
\emph{witnesses that $H$ is transient}, if the restriction 
$f_H$ of $f$ to ${E}(H)$ is a flow from some finite vertex set (to infinity) with finite 
energy.
\end{dfn}
As we can easily modify $f_H$ at finitely many edges to turn it into a flow from a single vertex (to 
infinity),
such an $f_H$ implies that $H$ is transient by \autoref{lyons}.

\begin{obs}\label{potential_or_harmonic}
Let $G$ and $G^*$ be locally finite dual plane graphs. 
Let $f$ be a divergence free flow in $G$ with finite energy. Then at least one of the following is 
true.
\begin{enumerate}[A)]
 \item \label{ph a} The function $f$ satisfies \kcl\ in $G^*$;
\item \label{ph b} there is a finite cut $c$ of $G$ such that $f$ witnesses that at least two 
components of 
$G-c$ are 
transient.
\end{enumerate}
 \end{obs}

\begin{proof}
Suppressing the bijection ${\cdot}^*$ between the directed edges of $G$ and $G^*$, the function $f$ 
can be thought of as a function on $\arE(G^*)$. If $f$ fails to satisfy (A), then there is a finite 
cycle $C$ of $G^*$ at which $f$ violates \kcl. Since $G$ and $G^*$ are dual, the 
edges of $C$ form a cut $C^*$ of $G$, separating it into two subgraphs $U,W$. Moreover, our 
assumption on $C$ means that the net flow of $f$ from $U$ to $W$ is non-zero. Thus $f_U$ is a flow 
from a finite set (namely, from those vertices of $U$ incident with an edge in $C^*$) witnessing 
that $U$ is transient. Similarly, $f_W$ witnesses that $W$ is transient too. 
\end{proof}

\begin{rem}
For finite plane dual graphs $G$ and $G^*$, a function $f$ satisfies Kirchhoff's cycle law in the 
graph $G$ if and only if it satisfies Kirchhoff's node law in the dual graph $G^*$. 
\autoref{potential_or_harmonic} could be understood as an extension of this fact. 
\end{rem}

\begin{comment}
The following `Monotone-Voltage Paths' lemma can be found in \cite[Corollary 3.3]{LyonsBook}.
\begin{lem}\label{mono-volt}
Let $G$ be a transient connected graph and $v$ the voltage function from the unit current flow $i$ 
from a vertex $o$ to $\infty$ with $v(\infty) = 0$. For every vertex $x$, there is a path  from $o$ 
to $x$ along which $v$ is monotone.
\end{lem}
\end{comment}

\section{Roundabout-transience} \label{secUKtr}

The \emph{roundabout graph $\mathring{G}$} of a locally finite plane graph $G$ is 
obtained from $G$ by replacing each vertex $v$ by a cycle (\defi{roundabout}) of length equal to the 
degree of $v$ 
so that every vertex gets degree $3$; formally, the vertex set of $\mathring{G}$ is the set of 
pairs 
$(v,e)$ where $e$ is an edge and  $v$ is an endvertex of $e$. The embedding of $G$ defines the 
(clockwise, say) cyclic 
order $C_v$ on the set of edges incident with the vertex $v$. The edges of $\mathring{G}$  are of 
two types; for each edge $e=\overrightarrow{vw} \in \arE(G)$, we have an edge in $\mathring{G}$ from 
$(v,e)$ to $(w,e)$. For any two consecutive edges $e$ 
and $f$ in the cyclic order $C_v$, we have an edge in $\mathring{G}$ from $(v,e)$ to $(v,f)$.

The roundabout graph $\mathring{G}$ has a canonical embedding in the plane, namely, the one that 
induces the embedding of $G$ when we contract each roundabout into a single vertex. 
\begin{comment} 
%Note that this definition makes sense for all plane graphs, not necessarily locally finite. 
%:PROBLEMATIC; INFINITE CYCLIC ORDER NOT UNIQUELY DETERMINED
\end{comment}

With a slight abuse of notation, we will treat $\arE(G)$ as a subset of $\arE(\mathring{G})$, with 
the understanding that 
$e=\overrightarrow{vw} \in \arE(G)$ is identified with $\overrightarrow{(v,e) (w,e)} \in 
\arE(\mathring{G})$.

We say that a graph $G$ is  \emph{roundabout-transient} if  $\mathring{G}$ is 
transient. 

\begin{obs}\label{roundabout_cut}
Every cut of $G$ is a cut of $\mathring{G}$.

Conversely, every cut $b$ of $\mathring{G}$ with $b\se E(G)$ is also a cut of 
  $G$.
\qed
\end{obs}

\begin{rem}\label{plane_not_planar}
The structure of $\mathring{G}$ depends on the chosen embedding of $G$. Here, we construct a planar 
graph $G$ that has both a transient and a recurrent roundabout graph (corresponding to different 
embeddings).

Let $G$ be the graph obtained from the infinite binary tree\footnote{The \emph{binary tree} is the 
unique infinite tree in which every vertex except for the root has degree three, and the root has 
degree two.} $T_2$ by attaching $2^n$ 
leaves at each vertex at distance $n$ from a fixed root of $T_2$. 
Let $G_1$ be the plane graph obtained by embedding  $G$ in the plane in such a way that
all leaves attached to $v$ are embedded consecutively for every $v\in V(T_2)$. It is straightforward 
to check that  $\mathring{G_1}$ is transient: by deleting all leaves of $\mathring{G_1}$ and their incident vertices (in the roundabouts) we 
obtain a subgraph of $\mathring{G_1}$ quasi-isometric to $T_2$. Thus $\mathring{G_1}$ is transient since $T_2$ is.  Let $G_2$ be  the plane graph 
obtained by embedding  $G$ in the plane in such a way that  the  leaves we attached at each vertex are 
separated into three equal subsets by the edges of $T_2$. It is 
not hard to check that $\mathring{G_2}$ is recurrent: the leaves now have the effect of introducing 
exponentially long subdivisions to the edges of $T_2$ at a certain distance from the root. 

To summarise, roundabout-transience is a property of plane graphs and not of planar 
graphs.
\end{rem}

\begin{lem}\label{roundabout_rem}
If $\mathring{G}$ is transient, then so is $G$. 
\end{lem}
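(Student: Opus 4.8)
The plan is to apply T.~Lyons' flow criterion (\autoref{lyons}) in both directions: transience of $\mathring{G}$ supplies a finite-energy flow on $\mathring{G}$, which I push forward to a finite-energy flow on $G$, whence $G$ is transient. So suppose $\mathring{G}$ is transient and fix a flow $\mathring{f}$ on $\mathring{G}$ from a single vertex of finite energy; this source lies in some roundabout $\mathring{o}$.

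Recall that the edges of $\mathring{G}$ split into the \emph{original} edges, joining $(v,e)$ to $(w,e)$ for each $e=vw\in E(G)$, and the \emph{roundabout} edges, which are internal to the cycles $\mathring{v}$. Using the identification $E(G)\se E(\mathring{G})$, I define an antisymmetric $f$ on $\arE$ by setting $f(\ded{vw}):=\mathring{f}(\ded{(v,e)(w,e)})$ for each edge $e=vw$; that is, $f$ simply reads off the values of $\mathring{f}$ on the original edges. Then finite energy is immediate, since $E(f)=\sum_{e\in E(G)}f(e)^2\le E(\mathring{f})<\infty$, the inequality coming from discarding the contribution of the roundabout edges.

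It remains to check that $f$ is a flow from $o$, and this is the one step deserving care. Fix $v\in V(G)$ and sum the node law for $\mathring{f}$ over all vertices of the roundabout $\mathring{v}$. Each roundabout edge has both endvertices in $\mathring{v}$, so its contributions cancel in pairs, and the sum equals the net $\mathring{f}$-outflow across the edges leaving $\mathring{v}$. By the construction of $\mathring{G}$, the edges going out of $\mathring{v}$ are precisely the original edges incident with $v$, and across each the flow is $f(\ded{vw})$; hence $\sum_{x\in\mathring{v}}\partial\mathring{f}(x)=\partial f(v)$. The left-hand side vanishes whenever $\mathring{v}$ avoids the source of $\mathring{f}$, i.e.\ for every $v\neq o$, and equals the positive intensity of $\mathring{f}$ when $v=o$. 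Thus $f$ satisfies \knl\ at every vertex except $o$, where it has positive intensity, so $f$ is a flow from $o$ of finite energy and $G$ is transient by \autoref{lyons}.

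I expect no genuine obstacle here: the only substantive point is the edge cancellation over $\mathring{v}$, which is just the discrete divergence identity applied to the vertex set $\mathring{v}$, combined with the structural fact that the boundary edges of each roundabout carry exactly the values defining $f$.
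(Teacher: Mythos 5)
Your proposal is correct and follows essentially the same route as the paper's own proof: restrict the finite-energy flow on $\mathring{G}$ to the original edges $E(G)\se E(\mathring{G})$, verify Kirchhoff's node law at each vertex of $G$ by summing the node law over the vertices of the corresponding roundabout (the internal roundabout edges cancelling in pairs), and bound the energy by discarding the roundabout-edge contributions. The only difference is that you spell out the cancellation explicitly where the paper invokes the node law at finite vertex sets directly.
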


\begin{proof}
Since $\mathring{G}$ is transient, it admits a flow  $f$ of finite energy from 
some vertex $o\in V(\mathring{G})$ by Lyons' criterion \autoref{lyons}. We will show that $f$ 
induces a flow of finite energy in $G$.

For a vertex $v\in V(\mathring{G})$, let us denote by $\mathring{v}$ the set of vertices lying in 
the same roundabout as $v$. Note that $f$ satisfies \knl\ at every vertex-set $\mathring{v}$ except 
$\mathring{o}$. Therefore, the restriction $f'$ of $f$ to the edges of $G$ satisfies 
\knl\ at every vertex of $G$ except the vertex $o'$ that gave rise to $\mathring{o}$. In other 
words, $f'$ is a flow from 
$o'$. Its energy is bounded from above by that of $f$, and so $G$ is transient by 
\autoref{lyons}.
\end{proof}

In the following we will often use the notation $\Grd$, by which we mean the 
roundabout graph $\mathring{(G^*)}$ of the dual $G^*$ of the plane graph $G$. For the rest of this section we assume $G^*$ to be locally finite.

The {\em \plg} \Gd\ of a plane graph $G$ is the plane graph obtained from \Gr\ 
by contracting all (non-roundabout) edges of $G$. Another way to define \Gd, explaining its name, 
is 
by letting the vertex set of \Gd\ be the set of midpoints of edges of $G$ and joining two such 
points with an arc whenever the corresponding edges are incident with a common vertex $v$ of $G$ 
and 
lie in the boundary of a common face of $v$. It is clear from this definition that 
\labtequ{Gdd}{$\Gd=(G^*)^\diamond =:\Gdd$.}
A third equivalent definition of \Gd\ can be given by considering a circle packing $P$ of $G$, 
letting $V(\Gd)$ be the set of intersection points of circles of $P$, and letting the arcs in $P$ 
between these points be the edges of \Gd.
A fourth definition of \Gd\ is as the dual of the bipartite graph $G'$, with $V(G')$ consisting of 
the vertices and faces of $G$, and $E(G')$ joining each vertex of $G$ to each of its incident faces.

It is easy to see that $\Gd$ is quasi-isometric (in fact Bilipschitz-equivalent) to \Gr. Since both 
graphs have bounded degrees, \autoref{lyons} easily implies the following (see e.g.\ \cite[Theorem~2.17]{LyonsBook})
\begin{lem}\label{roundabout_plg}
Let $G$ be a locally finite plane graph. Then $\mathring{G}$ is transient if 
and only if $\Gd$ is. \qed
\end{lem}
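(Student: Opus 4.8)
The plan is to prove both implications through \autoref{lyons}, by transferring a flow of finite energy across the contraction that turns $\mathring{G}$ into $\Gd$. Recall that $\Gd$ is obtained from $\mathring{G}$ by contracting the non-roundabout edges, i.e.\ the edges $\{(v,e),(w,e)\}$ for $e=vw\in E(G)$; these form a perfect matching of $\mathring{G}$, so contracting them identifies each such pair into a single vertex of $\Gd$ (the midpoint of $e$), while the roundabout edges survive and become exactly the edges of $\Gd$. The decisive structural feature is that both graphs have bounded degree: $\mathring{G}$ has maximum degree $3$ (each $(v,e)$ has exactly one non-roundabout neighbour and at most two roundabout neighbours), and $\Gd$ has maximum degree $4$. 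Hence flows can be pushed in either direction with only a bounded blow-up in energy.

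For the forward implication, suppose $\mathring{G}$ is transient and let $f$ be a flow of finite energy from a vertex $o$, provided by \autoref{lyons}. I would let $g$ be the restriction of $f$ to the roundabout edges, viewed as a function on $\overrightarrow{E(\Gd)}$. To check that $g$ obeys \knl\ at the midpoint of an edge $e=vw$ away from the source, I sum the two instances of \knl\ that $f$ satisfies at $(v,e)$ and $(w,e)$: the contribution of the contracted edge $\{(v,e),(w,e)\}$ appears with opposite signs in the two sums and cancels, leaving precisely $\partial g$ at the midpoint of $e$. Thus $g$ is a flow on $\Gd$, its energy is at most $E(f)$ since the non-roundabout terms have been discarded, and $\Gd$ is transient by \autoref{lyons}. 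This mirrors the argument of \autoref{roundabout_rem}.

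The backward implication is the substantive one. Given a flow $g$ of finite energy on $\Gd$, I want to lift it to a flow $f$ on $\mathring{G}$ by keeping $g$ on every roundabout edge and choosing suitable values on the non-roundabout edges. The value on the non-roundabout edge at $e=vw$ is forced by demanding \knl\ at $(v,e)$: it must equal minus the sum of the two roundabout flows leaving $(v,e)$. The key observation is that this same value \emph{also} makes \knl\ hold at $(w,e)$, because the discrepancy between the two requirements is exactly $\partial g$ at the midpoint of $e$, which vanishes wherever $g$ satisfies \knl, i.e.\ everywhere except at the source midpoint. For the energy, each non-roundabout value is a sum of two values of $g$, so its square is at most twice the sum of their squares; since each roundabout edge is incident with only two vertices of $\mathring{G}$ and hence occurs in at most two of these defining sums, the total energy added is bounded by a constant multiple of $E(g)$, and is therefore finite. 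At the single source midpoint the construction leaves an imbalance at its (at most two) preimage vertices, so $f$ is a flow of finite energy from a finite vertex set; by the discussion preceding \autoref{potential_or_harmonic} this still witnesses that $\mathring{G}$ is transient, completing the proof.

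The main obstacle I anticipate is the backward direction, specifically verifying that the forced values on the non-roundabout edges are globally consistent (\knl\ at \emph{both} endpoints of each contracted edge) and that the lift has finite energy. Both points hinge on isolating the right quantities: the consistency defect at a midpoint is exactly $\partial g$ there, and bounded degree caps the energy blow-up; once these are noted, what remains is routine bookkeeping together with the finite-set-source reduction already available in the text. I note that one could instead argue at a higher level, since contracting the matching is a quasi-isometry between the bounded-degree graphs $\mathring{G}$ and $\Gd$, and transience of the simple random walk is a quasi-isometry invariant among bounded-degree graphs; but I prefer the explicit flow argument above, as it is self-contained given \autoref{lyons} and matches the techniques already in use.
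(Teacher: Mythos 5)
Your proof is correct and takes essentially the same route as the paper: both directions go through \autoref{lyons}, restricting the flow to the roundabout edges for one implication and extending by the unique \knl-forced values on the contracted matching edges for the other, with the same bounded-degree energy estimate. You merely spell out the consistency check (the defect at a midpoint being $\partial g$ there) that the paper dismisses as ``an easy fact about \knl''.
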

\begin{comment}
This follows from \autoref{lyons}: if $\mathring{G}$ has a flow  $f$ of finite 
energy from $o\in V(\mathring{G})$, then $f$ induces such a flow $f'$ in  $\Gd$ from the 
contraction-vertex of $o$ by just restricting $f$ to the edges of $\Gd$, similarly as in the proof 
of 
\autoref{roundabout_rem}. 

Conversely, given a flow $f'$ in  $\Gd$.  We construct a flow $f$ 
on $\mathring{G}$ by letting $f(e)= f'(e)$ for every $e\in E(\Gd)$ and letting 
$f(e)$ be the unique value that makes both endvertices of $e$ satisfy \knl, unless those vertices 
correspond to $o$ in which case we let $f(e)$ be the unique value that makes exactly one endvertex 
of $e$ satisfy \knl. That such values always exist is an easy fact about \knl. The energy ${\cal 
E}(f)$ of $f$ is finite because the contribution of each vertex to ${\cal E}(f)$ is bounded above 
by 
a constant times the contribution of its contraction-vertex in  $\Gd$ to 
${\cal E}(f')$.
\end{comment}

\autoref{roundabout_plg}, combined with the fact that $\Gd=\Gdd$ \eqref{Gdd}, yields
that if $\mathring{G}$ is transient, then so is $\Grd$. Another way to state this is:
\labtequ{roundabout_dual}{$G$ is roundabout-transient if and 
only if $G^*$ is.}
Combining this with \autoref{roundabout_rem}, we obtain
\begin{cor}\label{dual trans}
If $\mathring{G}$ is transient, then so is $G^*$.
\end{cor}

\begin{proof}
By \autoref{roundabout_rem}, it suffices to show that the roundabout graph of the dual graph $G^*$ 
is transient. 
Recall that the planar line graph $\Gd$ of $G$ is equal to the planar line graph of the dual graph 
$G$. Thus by  \autoref{roundabout_plg} applied twice to the graph $G$ and the graph $G^*$, we 
deduce that the roundabout graph of $G^*$ is transient. 
\end{proof}

\begin{rem}
 It is a simple exercise to check that every bounded-degree transient plane graph is 
roundabout-transient.
\end{rem}

\section{Square tilings and the two crossing flows} \label{sec ST}%Embedding plane transient graphs in a cylinder}

\subsection{Square tiling basics}
In this section we use the theory of square tilings of (locally finite) transient plane graphs in 
order to find the 
special flows in our roundabout-transient $G$ mentioned in the introduction. These square tilings 
were introduced in \cite{BeSchrHar}, and generalise a 
classical construction of Brooks et. al.\ \cite{BSST} from finite plane graphs to infinite 
transient ones. 

Let $\cc$ denote the cylinder $(\R / \Z) \times \{0,1]$, or more generally, a cylinder $(\R / \Z) 
\times \{0,a]$ for some real $a>0$ (which will turn out to coincide with the effective resistance 
from a 
vertex $o$ to infinity).\footnote{Throughout this paper we use $\{a,b]$ to denote the half-open 
interval between $a$ and $b$ (which contains $b$ but not $a$).}
A \defi{square tiling} of a plane graph $G$ is a mapping $\tau$ assigning to each edge $e$ of $G$ a 
square $\tau(e)$ contained in \cc, where we allow $\tau(e)$ to be a `trivial square' consisting of 
just a point (see \autoref{figST} for an example). A nice property of square tilings is that every 
vertex $x\in V$ can be associated with a horizontal line segment $\tau(x)\subset \cc$ such that for 
every edge $e$ incident with $x$, $\tau(e)$ is tangent to $\tau(x)$, i.e.\ one of the sides of $\tau(e)$ is contained in $\tau(x)$. 

The construction of this $\tau$ is based on  the \rw\ flow $i$ from a root vertex $o$  (as defined 
in \autoref{secRWB}): the side length of the square $\tau(e)$ is chosen to be $|i(e)|$, and the 
placement of that square inside $\cc$ is decided by a coordinate system where potentials of 
vertices 
induced by the flow $i$ are used as coordinates. For example, the top circle of the cylinder $\cc$ 
is the `line segment' $\tau(o)$ corresponding to $o$, because $o$ has the highest potential. All 
other 
vertices and edges accumulate towards the base of $\cc$, because their potentials (which equal the 
probability for random walk from those vertices to return to $o$, normalised by the height of \cc) 
converge to 0; see 
\cite{planarPB} for details.
\medskip 

We let $w(\tau(e))$ denote the width of the square $\tau(e)$.
Our square tilings always have the following properties which we will use below:
\begin{enumerate}[I)]
\item \label{sti} Two of the sides of $\tau(e)$ are always parallel to the boundary circles of \cc;
\item \label{stii} $w(\tau(e))= |i(e)|$ for every $e\in \arE$, where $i$ denotes the \rw\ flow out 
of $o$;
\item \label{stiii} the interiors of any two such squares $\tau(e) , \tau(f)$ are disjoint;
\item \label{stiv} every point of \cc\ lies in $\tau(e)$ for some $e\in E$;
\item \label{stv} every vertex $x$ can be associated with a horizontal line segment
\footnote{$\tau(x)$ might be a full horizontal circle of \cc. This is always the case for $x=o$.} 
$\tau(x)\subset 
\cc$ so that for every edge $e$ incident with $x$, the square $\tau(e)$ is tangent to $\tau(x)$, and 
every 
point of $\tau(x)$ is in $\tau(f)$ for some edge $f$ incident with $x$,
 and
\item \label{stvi} every face $F$ can be associated with a vertical line segment $\tau(F)\subset 
\cc$ so that for every edge $e$ in the boundary of $F$, the square $\tau(e)$ is tangent to 
$\tau(F)$.
\end{enumerate}

It was shown in \cite{BeSchrHar} that a plane graph $G$ admits a square tiling exactly when $G$ is 
{\em uniquely absorbing}. We say that \g is \defi{uniquely absorbing}, if \fe\ finite subgraph 
$G_0$ 
\ti\ exactly one connected component $D$ of $\R^2\sm G_0$ which is \defi{absorbing}, that is, \rw\ 
on \g visits $G \sm D$ only finitely many times with positive probability (in particular, the 
subgraph of \g embedded in $D$ is 
transient, hence so is \G).

\subsection{Cutting the \rw\ flow along meridians} \label{sec disect}

A \defi{meridian} of \cc\ is a vertical line of the form $\{x\} \times \{0,1]\subset \cc$ for some 
$x\in \R / \Z$. Meridians are important, as they will allow us to `dissect' sub-flows of the \rw\ 
flow $i$.
To make this precise, given a vertex $x\in V(\G)$, we let $|x|$ denote the vertical `strip' of the 
cylinder \cc\ whose horizontal 
span coincides with that of the line segment $\tau(x)$ as described in \eqref{stv}: we let $|x|:= I 
\times \{0,a\} \subset \cc$, where $I$ is the interval of coordinates appearing in $\tau(x)$. Then 
$\tau(x)$ separates $|x|$ into two rectangles, and we denote the bottom one (that is, the one not 
meeting $\tau(o)$) by $\ceil{x}$. 

Next, we associate to this $x$ a flow $\check{x}$ from $x$ that `lives in  $\ceil{x}$'. Let us 
assume that each edge $e=vz$ of \G\ is directed `downwards', that is, the height coordinate of 
$\tau(v)$ is  higher than that of $\tau(z)$; we can make this assumption without loss of generality 
as we can always change the direction of an edge simultaneously with the sign of its flow. 
To define the flow $\check{x}$,  for every $e\in \arE(\G)$, let $\check{x}(e):=w(\tau(e) \cap 
\ceil{x})$ be the width of the rectangle  
$\tau(e)\cap \ceil{x}\subset \cc$ corresponding to $e$. (Thus if $\tau(e)$ is contained in 
$\ceil{x}$, then $\check{x}(e)=i(e)$ by \eqref{stii}, where $i$ is again the \rw\ 
flow  from $o$, and if $\ceil{x}$ dissects $\tau(e)$, then 
$\check{x}(e)<i(e)$.)  
A basic property of meridians (already observed in
\cite[Lemma 6.6]{planarPB}), is that $\check{x}$ is indeed a flow from $x$: to see this, let $v\neq 
x$ be any vertex such that $\tau(v)$ intersects $\ceil{x}$, and note that  $\check{x}$ brings flow 
into $v$ using the edges whose squares are tangent to $\tau(v)$ from above, and it removes flow into 
$v$ using the edges whose squares are tangent to $\tau(v)$ from below, and the total intensity of 
both these contributions equals the length of the intersection of $\tau(v)$ with $\ceil{x}$.

More generally, if $M,M'$ are two meridians intersecting $\tau(x)$, we let $\ceil{MxM'}$ denote the 
rectangle of \cc\ bounded by $M,\tau(x),M'$ and the bottom circle of \cc, and define the flow from 
$x$ 
that {\em lives in} $\ceil{MxM'}$ similarly to $\check{x}$, except that we replace the rectangle 
$\ceil{x}$ with $\ceil{MxM'}$ in the above definition.

The flows thus obtained always have finite energy, because the contribution of each edge $e$ to the energy is at most the area of $\tau(e)$ by the definitions, the whole area of \cc, and the interiors of $\tau(e), \tau(e')$ are disjoint for distinct edges $e,e'$.

\subsection{The basic lemma} \label{basic L}
The following lemma makes use of a square tiling to perform a certain `surgery' on the random walk 
flow $i$ on the \plg\ \Gd\ of a roundabout-transient graph \G. By recombining pieces of $i$ 
appropriately, we induce flows on $\Gr$ and $\Grd$ (or rather, on finite modifications of those 
graphs) that we will later use to make the intuition of \autoref{figfh} precise.

Every flow $i$ on \Gd\ induces a flow $i_\circ$ on $\Gr$, called the \defi{lift} of $i$ to $\Gr$, 
as follows. For every edge $e\in E(\Gd)$, we recall that $e$ is also an edge of \Gr, and just set 
$i_\circ(e)=i(e)$. For every other edge $e$ of \Gr, 
we let $i_\circ(e)$ be the unique value that forces $i_\circ$ to satisfy \knl\ at both 
endvertices $u,v$ of $e$. Such a value exists because $i$ satisfies \knl, and so the total 
divergence of  $u,v$ in $i_\circ$ is 0 for any value of $i_\circ(e)$.

\begin{lem}\label{4trans_graphs}
Let $G$ and $G^*$ be locally finite dual plane graphs. 
If $\mathring{G}$ is transient, then for some graph $H$ obtained from \g by contracting a finite 
connected subgraph into a vertex,
there are divergence free flows $f$ and $h$ of finite energy in $\mathring{H}$ and $\Hrd$ 
respectively, the supports of which intersect in a single edge (of 
$E(H)=E(H^*)$).
\end{lem}

The proof of this is a bit technical, but the main idea is quite simple. Let us assume that $H=G$ 
for a moment to explain the intuition. The interesting case is where $\mathring{G}$ is uniquely 
absorbing, in which case we can make use of the square tiling (of \Gd\ rather than \Gr\ for 
technical reasons). In this case, we use certain pairs of meridians to `dissect' four sub-flows 
$f_j$, from four distinct vertices $x_j$ to infinity, of the \rw\ flow on \Gd\ that live in four 
disjoint narrow rectangles of the tiling cylinder \cc\ of \Gd\ using the definitions of \autoref{sec 
disect}. Combining these flows in pairs using two finite flows, one from $x_1$ to $x_3$, and one 
from $x_2$ to $x_4$, we obtain two divergence free flows $f',h'$ in \Gd\ that `cross' in a manner 
corroborating the intuition of \autoref{figfh}. It is then straightforward to lift $f',h'$ to the 
desired flows $f,h$ in the two roundabout graphs using the above definition.

The statement of \autoref{4trans_graphs} may be a bit confusing, as it involves several graphs with 
shared edges. Our choice to work with \Gd\ may seem to be making matters worse at first sight, as it 
introduces one more graph. However, it makes life easier: rather than having to work with several 
graphs simultaneously, all non-trivial parts of the following proof deal with just one graph, \Gd. 
The nice aspect of \Gd\ is that it provides a concise representation of the graphs $G, G^*, \Gr$ and 
$\Grd$. The important property to remember is that the vertex set of \Gd\ is the (common) edge-set 
of $G$ and $G^*$, which is also the intersection of $E(\Gr)$ and $E(\Grd)$. Since the objective of 
\autoref{4trans_graphs} is a pair of divergence free flows in $\Gr$, $\Grd$ with a single common 
edge, this boils down to finding two divergence free flows in \Gd\ that cross at a single vertex. 
For technical reasons, it is a bit easier to find a pair of flows with finitely many crossings, and 
therefore we introduce the auxiliary graph $H$: after modifying a finite part of the graph where all 
crossings take place, it is easier to end up with a single crossing.

\begin{proof}[Proof of \autoref{4trans_graphs}]
We distinguish two cases, according to whether \Gd\ is uniquely absorbing or not.

If \Gd\ is uniquely absorbing, then  \cite{BeSchrHar} provides a square tiling of \Gd\ on a 
cylinder 
$\cc$ as described above, with $o$ being an arbitrary vertex of \Gd.

\begin{figure}%[h!]
%\vspace*{-3cm}
%\begin{overpic}[width=\linewidth,grid,tics=10]{az}
% sphere generated from matlab % sphere; cpmmand
\centering \begin{overpic}[width=.5\linewidth]{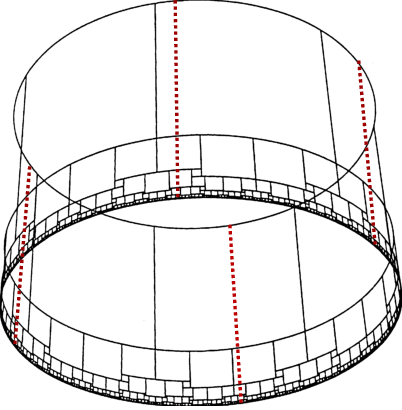} 
%\put(24,35){$w$}
%\put(66,35){$w^{-1}$}

\end{overpic}
 \caption{\small An example of a square tiling, with the four meridians $M_j$  of 
\autoref{4trans_graphs} in dotted lines.} \label{figST}

\end{figure}

Our plan is to find four vertices $x_1, \ldots, x_4$ far enough from each other on \cc\ and flows 
$f_j$ from those vertices that live in appropriate disjoint rectangles, and combine these flows 
pairwise to obtain $f',h'$.
More precisely, we claim that we can choose four vertices $x_j, 1\leq j\leq 4$ in \Gd, a flow 
$f_j$ from each $x_j$, and a path $P_j$ from $x_j$ to $o$, so that these objects satisfy the 
following properties, which can be summarised by saying that these objects avoid to meet a common 
roundabout of \Gd\ 
whenever possible\footnote{Recall that \Gd\ is obtained from \Gr\  by contracting all edges outside 
roundabouts. Whenever we talk about a \defi{roundabout of \Gd} we will mean a roundabout of \Gr\ 
considered as a subgraph of \Gd.}.
\begin{enumerate}[A)]
\item \label{P0} $\supp(f_k) \cap \supp(f_j)=\emptyset$ for $k\neq j$; even stronger, no \rbt\ of 
\Gr\ meets both $\supp(f_k) $ and $ \supp(f_j)$;
\item \label{Pi} for every $j\leq 4$ and every edge $e$ of $P_j$, no edge of the roundabout of \Gr\ 
containing $e$ is in the support of any $f_k, 1\leq k \leq 4$, and
\item \label{Pii}  the roundabout of \Gr\ containing the first edge of $P_k$ does not contain $x_j$ 
and does not contain any edge of $P_j$ for $j\neq k$.
\end{enumerate}

Before proving that such a choice is possible, let us first see how it helps us construct the 
desired divergence free flows $f,h$. 

Let $X$ be the set of vertices $v$ of \g such that the roundabout $\mathring{v}$ in \Gd\ contains an 
edge of $P_j$ but does not contain the first edge of $P_j$. By construction, $X$ spans a connected 
subgraph of \G, since all  $P_j$ contain $o$. By modifying the $P_j$ appropriately if needed, we may assume that $X$ is a tree. Let $H$ be the graph obtained from $G$ by contracting 
$X$ into a single vertex $x$. 

It is straightforward to see that $\Hd$ can be obtained from \Gd\ by replacing all roundabouts 
corresponding to vertices in $X$ by the single roundabout $\mathring{x}$. The 
desired flows $f,h$ will be obtained as lifts ---as defined before the assertion of 
\autoref{4trans_graphs}--- of auxiliary flows $f',h'$ in \Hd\  constructed as follows. By the 
construction of $H$, the first edge of each $P_j$ lies in a roundabout $O_j$ that shares a vertex 
$y_j$ with $\mathring{x}$, and $O_j\neq O_k$ for $j\neq k$. In particular, $x_j$ lies on $O_j$ too; 
see \autoref{figRi}. Note that $O_j$ might have several vertices in common with $\mathring{x}$, 
because $H$ was obtained from $G$ by a contraction that may have introduced parallel edges. In this 
case, we choose $y_j$ so that $O_j$ contains an $x_j$--$y_j$ path $Q_j$ that only meets 
$\mathring{x}$ at $y_j$. 

Assume without loss of generality that $y_1,y_2,y_3,y_4$ appear in that order as we move around 
$\mathring{x}$ clockwise. We will construct a divergence free flow $f'$ as a linear combination of 
$f_1,f_3$, and a constant flow from $x_1$ to $x_3$ along a path $P_f$ contained in $O_1 \cup 
\mathring{x} \cup O_3$. We choose this $P_f$ to be a concatenation of $Q_1$, of one of the two 
$y_1$--$y_3$~paths contained in $\mathring{x}$, and of $Q_3$. 
To make the definition of $f'$ precise, suppose the
intensity\footnote{Recall that the {intensity} of $f_j$ is the divergence $f_j^*(x_j)$.} of $f_1$ 
is $\beta \in \R_+$, and the intensity of $f_3$ is $\gamma \in \R_+$. For each edge $e\in 
supp(f_1)$, we set $f'(e)=f_1(e)/\beta$. For each edge $e\in supp(f_3)$, we set 
$f'(e)=-f_3(e)/\gamma$. Finally, for each edge $e$ of $P_f$, we set $f'(e)=1$ if the direction of 
$e$ agrees with that of $P_f$ (which is from $x_1$ to $x_3$), and $f'(e)=-1$ otherwise. It is 
straightforward to check that $f'$ satisfies \knl.

\begin{figure}%[h!]
%\vspace*{-3cm}
%\begin{overpic}[width=\linewidth,grid,tics=10]{az}
% sphere generated from matlab % sphere; cpmmand
\centering \begin{overpic}[width=.4\linewidth]{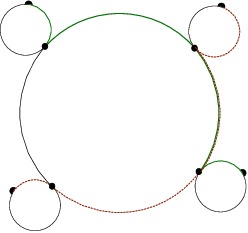} 
%\put(24,10){$z$}
\put(70,70){$y_2$}
\put(20,70){$y_1$}
\put(57,24){$y=y_3$}
\put(20,24){$y_4$}
\put(1,95){$x_1$}
\put(80,95){$x_2$}
\put(89,19){$x_3$}
\put(5,12){$x_4$}
\put(40,93){$P_f$}
\put(44,0){$P_h$}
\put(44,44){$\mathring{x}$}
\end{overpic}
 \caption{\small The roundabouts  $O_1,O_2,O_3,O_4$ and $\mathring{x}$, along with the 
$x_1$--$x_3$~path $P_f$ (shown in green, if colour is shown) and the $x_2$--$x_4$~path $P_h$ 
(dashed, red) used in the definition of $f',h'$, respectively. } \label{figRi}

\end{figure}

Similarly, we construct the flow $h'$ as a linear combination of $f_2,f_4$, and a constant flow from 
$x_2$ to $x_4$ along a path $P_h$ contained in $O_2 \cup \mathring{x} \cup O_4$, obtained by 
concatenating $Q_2$ and $Q_4$ with one of the two $y_2$--$y_4$~paths contained in $\mathring{x}$. 
Finally, let $f$ be the lift of $f'$ to $\Hr$ and let $h$ be the lift of $h'$ to $\Hrd$. 

We claim that these flows satisfy our requirement $|supp(f) \cap supp(h)|=1$. To see this, we 
observe that  there is a
unique  vertex $y$ of $\mathring{x}$ at which $P_f$ switches between two roundabouts of $\Hr$ and 
simultaneously $P_h$  switches between two roundabouts 
of $\Hrd$.\footnote{In the example of \autoref{figRi}, we have $y=y_3$. If we had lifted $f'$ to 
$\Hrd$ and $h'$ to $\Hr$ instead, then we would have had $y=y_2$. If we had chosen a $P_f$ that 
uses 
the other $y_1$--$y_3$~path of $\mathring{x}$, then we would have had $y=y_4$.} Indeed, $P_f$ stays 
within a roundabout of $\Hr$ except precisely at the vertices $y_1$ and $y_3$, where it switches 
from $O_1$ to $\mathring{x}$ and from $\mathring{x}$ to $O_3$ respectively. Moreover, $P_h$ 
contains exactly one vertex $y\in \{y_1,y_3\}$, and it contains two edges of  $\mathring{x}$ 
incident with $y$, therefore it switches between the two roundabouts of $\Hrd$ incident with $y$.

We claim that the unique edge (of $E(H)=E(H^*)$) in $supp(f) \cap supp(h)$ incident with $x$  is the edge corresponding to $y$. To see this, note first that $supp(f_i)$ avoids all edges  incident with $x$  by \eqref{Pi}, and so it remains to check our claim for the part of $f$ and $h$ arising as lifts of the unit flows we sent along $P_f$ and $P_h$. Since $P_f$ stays 
within a roundabout of $\Hr$ except precisely at $y_1$ and $y_3$, by the definition of a lift 
we deduce that the only edges  in $supp(f)$ incident with $x$  are the edges corresponding to $y_1$ and $y_3$. Since $P_h$ contains exactly one of these vertices $y$, we deduce that $supp(h)$ contains the corresponding edge, but does not contain the other edge  in $supp(f)$ incident with $x$. This proves our claim.

Finally, no edge that is not incident with $x$ can lie in $|supp(f) \cap supp(h)|$ by properties 
\eqref{P0}--\eqref{Pii}: these properties were designed exactly so as to prevent further 
intersections.

\medskip
Thus, in the uniquely absorbing case, it only remains to prove that we can indeed choose vertices 
$x_j$, flows $f_j$, and paths $P_j$ with properties \eqref{P0}, \eqref{Pi} and \eqref{Pii} above.

For this, recall that the length of the circumference of \cc\ is 1, and  let $M_j, 0\leq j < 4$ 
denote 
the meridian of \cc\ whose width coordinate is $j/4$. 
Let $S$ be the set of roundabouts $O$ that contain an edge $e$ with $w(\tau(e))\geq 1/8$. As the set $S$  is finite, we may let $b>0$ be the least vertical coordinate in the set $\bigcup_{O\in S} \tau(O)$, where $\tau[O]:= \bigcup_{e\in E(O)} \tau(e)$ comprises the squares of the edges of $O$. For each $j$, pick $h_j< \min(b,1/16)$.
%Define the \defi{width} of a \rbt\ $O$ to be the maximum width of a horizontal line segment contained in $\tau[O]$, where $\tau[O]:= \bigcup_{e\in E(O)} \tau(e)$ comprises the squares of the edges of $O$. For each $j$, let $h_j\in (0,\frac{1}{16})$ be small enough that every \rbt\ of \Gr\ meeting $M_j$ at a point whose height coordinate is less than $h_j$ has width less than $1/8$. Such a choice is possible because, firstly, each horizontal line segment meets $\tau[O]$ in two squares  by \eqref{stvi} (where we use the fact that $O$ bounds a face of \Gd), and secondly, a square that starts at distance $r$ from the bottom of \cc\ has width at most $r$ (since the width of a square equals its height). 
In addition, we choose $h_j$ even smaller, if needed, to ensure that 
if 
$x$ is a vertex such that $\tau(x)$ meets $M_j$ below height $h_j$, then $w(\tau(x))< 1/8$; this is 
possible because there are only finitely many edges $e$ with $w(\tau(e))$ greater than any fixed 
constant since $\cc$ has finite area, and any horizontal line segment meets $\tau(x)$ in at most three squares by \eqref{stv} and the fact that \Gd\ is 4-regular.

Let $\ceil{h_j M_j}$ denote the sub-interval of $M_j$ with height coordinates ranging between zero 
and 
$h_j$, and $\floor{h_j M_j}$ the sub-interval of $M_j$ with height coordinates ranging between $h_j$ 
and 
1.

It is proved in \cite[Theorem~4.1~(v)]{BeSchrST} that almost every meridian with respect to Lebesgue 
measure meets only finitely many squares of the tiling lying above any fixed height. We may assume 
that our $M_j, 0\leq j <4$, all have this property, for otherwise we can achieve it by rotating \cc.
Therefore, for every $j< 4$, there is a lowest square  $\tau(e_j)$ meeting $\ceil{h_j M_j}$ such 
that the \rbt\ 
$O_j$ of \Gr\ containing the edge $e_j$ also contains an edge $g_j$ meeting $\floor{h_j M_j}$ 
(\autoref{figOi}); this is true because $\floor{h_j M_j}$ only meets finitely many 
squares of positive area, and so there are finitely many \rbt s to choose from. There is at least 
one to choose from: a \rbt\ whose image contains the point of $M_j$ at height $h_j$.

\begin{figure}%[h!]
%\vspace*{-3cm}
%\begin{overpic}[width=\linewidth,grid,tics=10]{az}
% sphere generated from matlab % sphere; cpmmand
\centering \begin{overpic}[width=.15\linewidth]{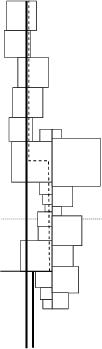} 
\put(30,36){$h_j$}
\put(2,-5){$M_j$}
\put(-20,6){$\ceil{M_j x_j M_j'}$}
\put(10,-4){$M_j'$}
\put(-5,22){$x_j$}
\put(9,25){$e_j$}
\put(9,50){$g_j$}
\put(16,55){$P_j$}
\end{overpic}
\vspace*{.3cm}
 \caption{\small The choice of $x_j, f_j$ and $P_j$.} \label{figOi}

\end{figure}

Let $x_j$ denote the endvertex of $e_j$ whose height coordinate is lower, and note that $\tau(x_j)$ 
meets $M_j$. Let $M_j'$ be a meridian meeting $\tau(e_j)$ (and in particular $\tau(x_j)$) close 
enough to $M_j$, but distinct from $M_j$, that the rectangle $\ceil{M_j x_j M_j'}$ bounded by $M_j, 
\tau(x_j), M_j'$ and the bottom circle of \cc, meets the $\tau$ image of no \rbt\ meeting $\floor{h_j 
M_j}$; such a $M_j'$ exists because, by the choice of $e_j, O_j$, no roundabout meeting $\lfloor h_j
M_j \rfloor$ has an edge $e$ such that $\tau(e)$ meets $M_j$ below
$\tau(x_j)$, or we would have chosen $e$ instead of $e_j$. 

As we 
can choose $M_j'$ as close to $M_j$ as we wish, we may assume that $d(M_j,M_j')<1/16$, which will 
be 
useful later.

Let $f_j$ be the flow from $x_j$ that lives in $\ceil{M_j x_j M_j'}$, as defined in 
\autoref{sec disect}. Recall that $f_j$ must have finite energy. 
We claim that 
\labtequ{w8}{If $e\in \supp(f_j)$, then $\tau(e)$ is contained in the open vertical strip of radius 
$1/8$ centered at $M_j$.}
Indeed, by the definition of $f_j$, if $e\in \supp(f_j)$, then $\tau(e)$ intersects the interior of 
$\ceil{M_j x_j M_j'}$. Then $\tau(e)$ cannot have a point at height higher than $h_j$, which we 
recall is less than  $1/16$, because it would have to intersect the interior of $\tau(e_j)$ in that 
case, contradicting \eqref{stiii}. Thus the height of $\tau(e)$ is at most $1/16$, and being a 
square, so is its width. Together with our assumption that $d(M_j,M_j')<1/16$, this proves our 
claim.

Note that \eqref{w8}, combined with the choice of the $M_j$, immediately implies that $\supp(f_k) 
\cap \supp(f_j)=\emptyset$ for $k\neq j$; in fact, it even implies the stronger statement of 
\eqref{P0}, because by \eqref{stvi} if edges $e,f$ lie in a common \rbt\ then $\tau(e),\tau(f)$ must 
meet a common meridian.

It remains to construct the paths $P_j$: we let $P_j$ start with the $x_j$-$g_j$ path in $O_j$ 
containing $e_j$, and continue with the $g_j$-$o$ path consisting of all the edges whose 
$\tau$-image meets $M_j$ above $\tau(g_j)$. Recall that there are only finitely many such squares as 
we remarked above. The fact that  
the edges whose $\tau$-image meets $M_j$ above $\tau(g_j$) form a $g_j$-$o$ path follows from 
\eqref{stv} and the fact that $\tau(o)$ is the top circle of $\cc$. In fact, by the above argument, 
we can even assume that $M_j$ does not contain a boundary of any square $\tau(e)$, and so $M_j$ 
uniquely determines that $g_j$-$o$ path. Note that by construction, 
\labtequ{Piedge}{every edge of $P_j$ is in a \rbt\ $O$ such that $\tau[O]$ meets $M_j$.}

To see that \eqref{Pi} is satisfied, note that if $O$ is any roundabout containing an edge in the 
support of $f_j$, then $O$ cannot contain any edge in any of the $P_k$. This is true for $k = j$ by  
the definition of $M_j'$ (see \autoref{figOi}). For $k \neq j$, if $e$ is in the support of $f_j$ 
then $\tau(e)$ cannot have a point at height higher than $h_j$. As we chose $h_j < b$, all the other 
edges $e'$ in the roundabout containing $e$ have $w(\tau(e')) < 1/8$. Thus  \eqref{Pi} follows from 
\eqref{w8} and \eqref{Piedge}.

Finally, we can prove \eqref{Pii} by a similar argument, now using the fact that $w(\tau(x_j))< 
1/8$ 
by the second part of our definition of $h_j$, and the fact that the \rbt\ containing the first 
edge 
$e_j$ of $P_j$ cannot have any squares of side length $1/8$ or greater, and therefore $\tau[O]$ cannot intersect $M_k$ for any $k\neq j$.
%is contained in the strip of radius $1/8$ centered at $M_j$ and every \rbt\ containing an edge of $P_j$ meets $M_j$ by \eqref{Piedge}.

Thus all three desired properties \eqref{P0}--\eqref{Pii} are satisfied, and as discussed above this 
completes the case where 
\Gd\ is uniquely absorbing.

\medskip
Suppose now \Gd\ is not uniquely absorbing. 
 Then for some finite subgraph $G_0$ of \Gd, we have at least two absorbing components $D_1,D_2$ in 
$\R^2\sm G_0$. By elementary topological arguments, $G_0$ contains a cycle $C$ such that both the 
interior $I$ and the exterior $O$ of $C$ contain transient subgraphs of \Gd, namely one of its face 
boundaries.

If any of these subgraphs $I,O$ is uniquely absorbing, then we can repeat the above arguments to 
that subgraph to obtain the two desired flows. 

Hence it remains to consider the case where there is a cycle $C_I$ in $I$ and a cycle $C_O$ in $O$ 
that further separate each of $I,O$ into two transient sides. In fact, we can iterate this argument 
as often as we like, to obtain many distinct transient subgraphs separated from any given cycle. Let 
us iterate it often enough to obtain four disjoint cycles $C_j, 1\leq j \leq 4$, and inside each $C_j$ a 
cycle $D_j$ such that the interior of $D_j$ is transient and no roundabout of \Gr\ meets any two of 
these eight cycles. We remark that the $D_j$ can be chosen internally
disjoint even if some or all of the $C_j$ are concentric. %we can pick the cycles $C_j$ so that they have disjoint interior; thus the $D_j$ are internally disjoint.)

We now apply \autoref{lyons} to each of the four interior sides of the $D_j$ to obtain four flows of 
finite energy  
$f_j$ from vertices $x_j$, such that the support of $f_j$ is contained in $D_j$. We can then 
combine those flows pairwise in a way similar to the uniquely absorbing case to obtain the two 
desired flows $f,h$: we can let $o$ be an arbitrary vertex outside all $C_j$, 
pick paths $P_j$ from $x_j$ to $o$, and again consider a graph $H$ obtained from \g by contracting 
the vertices corresponding to all roundabouts meeting the $P_j$ except for the first one. We then 
construct $f',h'$, and from them $f,h$, as indicated in \autoref{figRi}. The fact that $|\supp(f) 
\cap \supp(h)|=1$ follows 
from the same graph-theoretic arguments about the structure of \Gd, for which we did not need the 
square tiling.
\end{proof}

\section{Harmonic functions on plane graphs} \label{sec Hfpg}

In this section, we use \autoref{flow_pot_char} to prove a new existence criterion for 
non-constant Dirichlet harmonic functions in planar graphs, \autoref{flow_flow_char_roundabout} 
below, which is used 
in the proof of \autoref{thm:UK-trans}.
Before proving \autoref{flow_flow_char_roundabout}, we prove the following which may be of 
independent interest, and can be thought of as a warm-up towards the harder 
\autoref{flow_flow_char_roundabout}. The reader will lose nothing by skipping directly to 
\autoref{flow_flow_char_roundabout}. 

\begin{thm}\label{flow_flow_char}
 Let $G$ and $G^*$ be locally finite 1-ended dual plane graphs.
Then the following are equivalent:
\begin{enumerate}[A)]
 \item \label{ffc a} $G\not \in {\Ocal}_{HD}$;
\item \label{ffc b}  $G^*\not \in {\Ocal}_{HD}$;
\item \label{ffc c} there are divergence free flows $f$ and $h$ of finite energy in $G$ and $G^*$, 
respectively, whose supports 
intersect in a single edge.
\end{enumerate}
\end{thm}

\begin{proof}
By symmetry, it suffices to show that \eqref{ffc a} is equivalent to \eqref{ffc c}. 
For this,  assume first that $G\not \in {\Ocal}_{HD}$. Then by \autoref{flow_pot_char_reform} \g 
admits a 
divergence free flow $f$ and a potential $\rho$ such that both $f$ and 
$\partial \rho$ have finite energy and their supports intersect in a 
single edge. 
As $\partial \rho$ satisfies 
Kirchhoff's cycle law in $G$, when considered as a function on the dual  $G^*$ it satisfies 
Kirchhoff's node law at every vertex; that is, $\partial \rho$ is a divergence free flow of $G^*$. 
Hence $f$ and $h:=\partial \rho$ satisfy \eqref{ffc c}. 

For the converse, suppose \eqref{ffc c} holds.
Consider $h$ as a function on the edges of $G$. We are going to apply 
\autoref{potential_or_harmonic} to $G^*$ to
deduce that  $h$ satisfies Kirchhoff's cycle law in  $G$. Since $G$ is one-ended, item \eqref{ph b} 
of
\autoref{potential_or_harmonic}  cannot be satisfied, hence item \eqref{ph a} applies and says that  
$h$ satisfies Kirchhoff's cycle law in $G$.
Thus by \autoref{kcl pot} there is a potential $\rho$ in $G$ with $\partial \rho = h$, and so by 
\autoref{flow_pot_char_reform} the flow $f$ and the potential $\rho$ witness that
$G\not \in {\Ocal}_{HD}$. 
\end{proof}

\begin{eg}
We give a simple example of a graph $G$ such that neither the second nor the third condition imply 
the first  in \autoref{flow_flow_char} if we omit the assumption that $G$ and $G^*$ are 
1-ended.
We first construct an auxiliary graph $H$ from the disjoint union of a family of cycles $C_n, n\in 
\Nbb$, where $C_n$ has length $2^n$, by gluing $C_n$ and 
$C_{n+1}$ together along an edge for each $n\geq 2$; we choose the two gluing edges in $C_n$ so 
that they have distance $|C_n|/2-1$. 
We obtain the graph $G$ by attaching two copies of $H$ at distinct vertices of a triangle $T$.
Clearly, the graph $G$ is in ${\Ocal}_{HD}$. 
In \autoref{fig:cycle_counterX} we will construct an embedding of the graph $G$ such that the second 
and 
third condition are satisfied. 

To see this, we consider the embedding of $G$ in the plane indicated in 
\autoref{fig:cycle_counterX}. The dual $G^*$ corresponding to this embedding is a 1-way infinite 
path with many parallel edges; in fact the removal of any vertex splits it into two transient 
subgraphs.  Easily, $G^*$ has a Dirichlet harmonic function (see e.g.\ 
\cite[Theorem~4.20]{SoardiBook}). To see that the third condition is satisfied, we let $f$ be a 
divergence free flow in $G$ supported on the triangle $T$, and let $h$ be a flow with infinite 
support in $G^*$ that uses only one edge of $T$; the latter exists because the intersection of each 
side of $T$ with $G^*$ is transient.
\end{eg}
   \begin{figure} [htpb]   
\begin{center}
       \includegraphics[height=4cm]{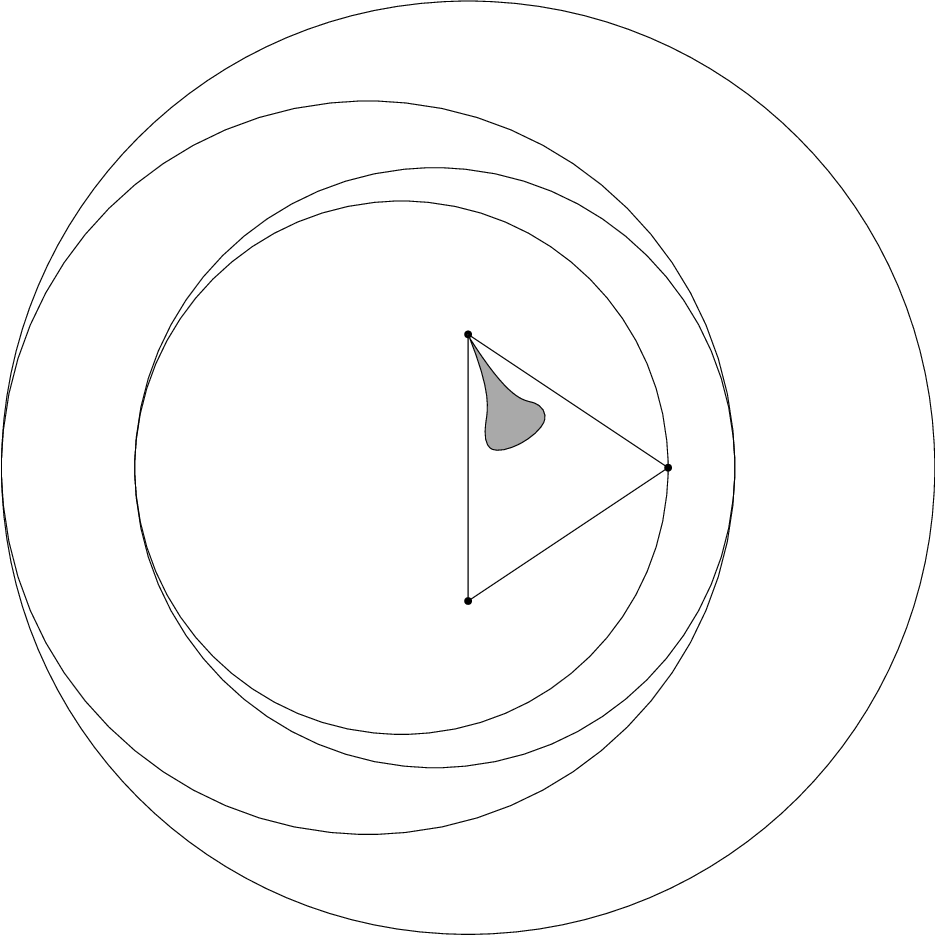}
         \caption{ An embedding of the graph $G$ in the plane. One copy of $H$ is embedded on the 
outside of the triangle. The other copy is embedded in the grey region in an analogous way (here we 
embed the cycle $C_{n+1}$ inside $C_n$).}\label{fig:cycle_counterX}
\end{center}
   \end{figure}

The next result provides a strengthening of condition \eqref{ffc c} of \autoref{flow_flow_char} 
which implies that $G\not \in {\Ocal}_{HD}$ even if $G$ has more than one end. 

\begin{thm}\label{flow_flow_char_roundabout}
 Let $G$ and $G^*$ be locally finite dual plane graphs such that  their {roundabout graphs} 
$\mathring{G}$ and $\Grd$ admit divergence free flows $f$ and $h$ respectively, both of finite 
energy,
the supports of which intersect in a single edge (of 
$E(G)=E(G^*)$).
Then $G\not \in {\Ocal}_{HD}$.
\end{thm}

\begin{proof}
Since divergence free flows satisfy Kirchhoff's node law at finite vertex-sets, the restriction of 
the flow $h$ of 
$\Grd$ to the edges of $G^*$ is a divergence free flow in $G^*$. We denote that flow by 
$h_{G^*}$. We distinguish two cases. 

{\bf Case 1: the flow $h_{G^*}$ considered as a function on the edges of $G$ 
satisfies \kcl\ in $G$.} 

Then  $h_{G^*}= \partial \rho$ for some potential $\rho$ on $G$ by \autoref{kcl pot}. As above, the 
restriction of $f$  to the edges of $G$ is a divergence free flow $f_G$ in that graph.  
Then $f_G$ and the potential $\rho$ of $G$ witness that 
$G\not \in {\Ocal}_{HD}$ by \autoref{flow_pot_char_reform}.

\smallskip
Having dealt with Case 1, by \autoref{potential_or_harmonic} (applied to $G^*$) it remains to 
consider the following.

{\bf Case 2:  there is a finite cut $c$ of $G^*$ such that $h_{G^*}$ witnesses that at least two 
components  of $G^*-c$ are transient.}\\ 
We start with a slightly technical argument that essentially shows that it suffices to consider the 
case that the cut $c$ is a bond. 
Let $\tilde D_1$ and $\tilde D_2$ be components of $G^*- c$ such that $h_{G^*}$ witnesses that they 
are transient. Let $b$ be a minimal cut contained in the cut $c$ that separates some vertex of 
$\tilde D_1$ from some vertex in $\tilde D_2$.  
Let $D_i$ be the component of $G^*- b$ including $\tilde D_i$ (for $i=1,2$). 
By setting $h_{G^*}$ equal to zero at components of $G^*- 
c$ different from $\tilde D_1$ and $\tilde D_2$, and by multiplying all its values in $\tilde D_1$ 
by the same constant if necessary, we may assume and we do assume that $h_{G^*}$ witnesses that 
$D_1$ and $D_2$ are transient. 

Having finished this slightly technical part, we conclude that the bond $b$ considered as an edge 
set of $G$ is the set of edges of a cycle $C$, such that $D_1$ and 
$D_2$ lie on different sides of $C$  by \autoref{lf bonds}, see \autoref{fig:C}.

      \begin{figure}[htpb]
\begin{center}
   	  \includegraphics[height=3 cm]{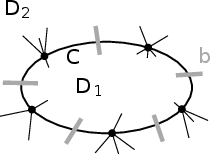} 
   	  \caption{The bond $b$ in $G^*$, drawn grey, separates the components $D_1$ and $D_2$.
   	  The corresponding cycle $C$ in \G, drawn thick, separates two transient subgraphs 
associated to the components $D_1$ and $D_2$.}\label{fig:C}
\end{center}
   \end{figure}

Our plan is to show that the two subgraphs $G_1,G_2$ of \g in either side of $C$ ---defined more 
formally below--- are transient, and apply \autoref{fin_sep} to deduce that $\g \not\in 
{\Ocal}_{HD}$. Since we know that $D_1,D_2$ are transient subgraphs of $G^*$, we would like to pass 
this information to the dual $G$ to deduce that  $G_1,G_2$ are transient too. The tool we have is 
\autoref{dual trans}, but there are two difficulties in applying it: firstly, we need $D_1,D_2$ to 
be roundabout-transient rather than just transient to apply this tool. Secondly, $D_i$ is not quite 
the dual of $G_i$, as the dual of a subgraph is not quite a subgraph of the dual.

To overcome the first difficulty, recall that every cut of $G^*$ is a cut of $\Grd$ by the 
definitions, and so we can think of $b$ as a cut of $\Grd$.
Recall moreover that $h_{G^*}$ was obtained from $h$ by restriction. But since $h_{G^*}$ witnesses 
that both components $D_1,D_2$ of $G^*-b$ are transient, it follows from the definitions that $h$ 
witnesses that both components $D'_1,D'_2$ of $\Grd-b$ are transient. In other words, $D_1,D_2$ are 
both roundabout-transient. (Indeed, $D_i'$ is almost equal to the roundabout graph $D_i^\circ$ of 
$D_i$; that is, they agree except at the finitely many roundabouts that contain endvertices of the 
bond $b$. However, changing finitely many vertices does not affect transience\footnote{Formally, 
we can argue similarly, as in the `second difficulty' explained below.}). Hence their duals are 
transient by \autoref{dual trans}. 

It remains to overcome the second difficulty, namely to explain the relationship between $D_i^*$ and 
$G_i$, where we define $G_1$ to be the subgraph of \g spanned by all vertices lying on the cycle $C$ 
and its inside, and we define
$G_2$ to be the subgraph of \g spanned by all vertices lying on the cycle $C$ and its outside. Let 
$G_i'$ be the graph obtained from $G_i$ by contracting  $C$ into a single vertex (we may create 
parallel edges by this contraction, but this is ok). 

By the definition of the dual of a plane graph, deleting an edge in the primal corresponds to 
contracting the same edge in the dual, and vice-versa \cite{Oxley2}. This is still true when the 
deleted edges disconnect the graph into two components $C_1,C_2$; in this case, the  corresponding 
contractions in the dual create a cutvertex $v$, disconnecting it into two components $C'_1,C'_2$ 
and the dual of each $C_i$ coincides with the graph spanned by $C'_1$ and $v$. Applying this fact in 
our situation, we observe that  $D_i^*$ coincides with $G_i'$, because $D_1 \cup D_2$ is obtained 
from $G^*$ by deleting the edges in $b$, and so the dual of $D_1 \cup D_2$ is the graph obtained 
from \g by contracting the edges in $C$.

To summarise, we have proved that $G_1', G_2'$ are transient. Hence so are the subgraphs $G_1'', 
G_2''$ of $G$ obtained by deleting the contracted vertex from each of $G_1', G_2'$ (in other words, 
the subgraphs of \g lying in either side of $C$). Applying \autoref{fin_sep} to these subgraphs, we 
deduce that $\g \not\in {\Ocal}_{HD}$ (to be more precise, we apply \autoref{fin_sep} to $G_1'', G_2 
(= G_2''\cup C)$ to make sure these subgraphs define a cut of $G$, i.e.\ they bipartition $V(G)$, 
but as transience is preserved by finite modifications, this is straightforward).

\end{proof}

\section{Proof of the main result} \label{sec proof}

We can now prove \autoref{thm:UK-trans}.
\begin{proof}
We have already collected enough tools for the case where $G^*$ is locally finite too: in this case, 
we can apply \autoref{4trans_graphs} to deduce that for some graph $H$ obtained from \g by 
contracting a finite connected subgraph, there are divergence free flows $f$ and $h$ in $\Hr, \Hrd$ 
respectively intersecting at a single edge. 
Plugging this into \autoref{flow_flow_char_roundabout} 
we deduce that $H\not\in {\Ocal}_{HD}$. Since $H$ differs from \g in finitely many vertices and edges, we easily obtain ---e.g.\ using \autoref{flow_pot_char}--- that $G\not\in {\Ocal}_{HD}$ as claimed.

Thus it remains to consider the case where $G^*$ is not locally finite, or in other words, where \g has faces bounded by infinitely many edges. We will reduce this case to the above, by constructing a supergraph $T$ of $G$ with locally finite dual $T^*$ such that $G\in {\Ocal}_{HD}$ if and only if $T\in {\Ocal}_{HD}$.

For this, let us first construct a supergraph  $G'$ of $G$ obtained by 
adding edges in order to split every infinite face of $G$ into finite faces in such a way that each vertex of $G$ receives at most 2 
new edges per incident 
face 
(any finite number would do in place of 2). This is easy to 
do recursively by enumerating the vertices of $G$ that lie on an infinite face, and in each step $i$ adding a finite set of edges $C_i$, disjoint from all $C_j, j<i$, that puts the next available vertex in the enumeration on a finite face boundary. %\footnote{The resulting graph is not necessarily a triangulation of $\R^2$, as \g may have accumulation points of vertices in its embedding.}
As $V(G)$ is countable, so is the set of newly added edges. Fix an enumeration $(e_n)_{n\in \Nbb}$ of the set of newly added edges, and subdivide $e_n$ by $2^n$ new vertices. Let  $T$ denote the resulting graph.

Note that $T$ is locally finite, and all its face boundaries are finite, hence $T^*$ is locally 
finite. Its roundabout graph $T^\circ$ has a subgraph $T'$ which can be obtained from \Gr\ by 
subdividing each edge at most twice: we obtain $T'$ by deleting from 
$T^\circ$ the roundabouts corresponding to the vertices in $V(T) \sm V(G)$; the subdivisions are due 
to the newly added edges $e_n$. By \autoref{lyons}, $T^\circ$ is transient since \Gr\ is. 
As $T^*$ is locally finite, we can prove that $T \not\in {\Ocal}_{HD}$ by the arguments of the first paragraph of this proof. 

We now claim that $T \not\in {\Ocal}_{HD}$ implies the  desired $G\not\in{\Ocal}_{HD}$. Indeed, this follows from Corollary~1.2 of \cite{C:harmonic}, which states that if a connected graph $G$ is obtained from a connected graph $T$ by deleting a set of edges of finite total conductance, then $T \in {\Ocal}_{HD}$ if and only if $G\in{\Ocal}_{HD}$. In our setup all edges have conductance 1, but we can replace each path of length $2^n$ that we attached to $G$ to obtain $T$ by a single edge of conductance $1/2^n$; by the classical series law (see e.g.\ \cite[{Section~2.3}]{LyonsBook}), this modification results in a network $T'$ that is `equivalent' to $T$, in particular, $T'\in {\Ocal}_{HD}$ if and only if $T\in{\Ocal}_{HD}$. As the sum of these conductances is finite, the aforementioned result applies, and we deduce that $G\not\in{\Ocal}_{HD}$.
\end{proof}

\section{Non-amenable graphs} \label{secApps}

A vertex is in the \emph{neighbourhood $\partial X$} of some vertex set $X$ if it is not in $X$ but 
shares an edge with a vertex in $X$.\footnote{With a slight abuse of notation we use the operator 
$\partial$ to denote two unrelated concepts: the difference operator of a potential, as well as the 
set of neighbours of vertex-sets in the context of non-amenability.} 
An infinite graph $G$ is \emph{non-amenable} if there is a constant $\gamma>0$ such that for
every finite vertex set $S$ of $G$ we have $|\partial S| \geq \gamma\cdot |S|$. 
For a nonempty vertex-set $X$, we let $ch({X})=\frac{|\partial {X}|}{|{X}|}$, and define the 
the \emph{Cheeger-constant}  $ch(G)$ of a graph $G$ to be the infinimum of $ch({X})$ ranging over 
all finite nonempty vertex-sets.

\begin{lem}\label{non-amenable_roundabout}
If a (simple) locally finite plane graph $G$ is non-amenable, then so is its roundabout graph 
$\mathring{G}$. 
\end{lem}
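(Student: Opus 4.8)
The plan is to replace the vertex isoperimetric constant by the edge one. The roundabout graph $\mathring{G}$ is $3$-regular, so for any finite $T\subseteq V(\mathring{G})$ the edge boundary $\partial_E T$ (the set of edges of $\mathring{G}$ with exactly one endvertex in $T$) and the vertex boundary satisfy $|\partial_E T|\le 3\,|\partial T|$, since each edge of $\partial_E T$ can be sent to its endvertex outside $T$, a map that is at most $3$-to-$1$. Hence it suffices to find $\delta>0$ with $|\partial_E T|\ge\delta\,|T|$ for all finite $T$: this yields $|\partial T|\ge(\delta/3)\,|T|$, i.e.\ non-amenability of $\mathring{G}$.

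The heart of the matter is a volume-weighted isoperimetric inequality for $G$ that holds because $G$ is planar. Write $\mathrm{vol}(S):=\sum_{v\in S}d(v)$ and let $\partial_E^G S$ be the edge boundary of $S$ in $G$. I claim $|\partial_E^G S|\ge c\,\mathrm{vol}(S)$ for all finite $S\subseteq V(G)$ and some fixed $c>0$. Indeed, simplicity and planarity give $|E(G[S])|\le 3|S|$, so $\mathrm{vol}(S)=2|E(G[S])|+|\partial_E^G S|\le 6|S|+|\partial_E^G S|$; and non-amenability gives a $\gamma>0$ with $|\partial_E^G S|\ge|\partial S|\ge\gamma|S|$ (every vertex of $\partial S$ is the endvertex of some edge of $\partial_E^G S$), hence $|S|\le|\partial_E^G S|/\gamma$ and $\mathrm{vol}(S)\le(1+6/\gamma)\,|\partial_E^G S|$. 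Planarity enters only here, and is essential: for unbounded degrees, vertex non-amenability does not force its volume form, and the bound $|E|\le 3|V|$ is exactly what repairs this.

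To transfer this to $\mathring{G}$ I would split $|\partial_E T|=A+B$, where $A$ is the number of boundary edges lying within a single roundabout and $B$ the number of \emph{spoke} edges $(v,e)(w,e)$ in $\partial_E T$; recall the spokes are in bijection with $E(G)$. Setting $t_v:=|T\cap\mathring{v}|$, one has $A=2\sum_v a_v$ with $a_v$ the number of arcs cut out of the cycle $\mathring{v}$ by $T$, and a one-line count of spoke endvertices gives the identity $B=|T|-2B_{\mathrm{in}}$, where $B_{\mathrm{in}}$ counts the spokes with both endvertices in $T$. When $T$ is a union of full roundabouts $\bigcup_{v\in S}\mathring{v}$ this closes immediately: then $A=0$, $B_{\mathrm{in}}=|E(G[S])|$ and $|T|=\mathrm{vol}(S)$, so $|\partial_E T|=\mathrm{vol}(S)-2|E(G[S])|=|\partial_E^G S|\ge c\,\mathrm{vol}(S)=c\,|T|$.

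For general $T$ the plan is to round to this situation using $S:=\{v:t_v\ge d(v)/2\}$, for which $\mathrm{vol}(S)\le 2|T|$, and to charge the edges of $\partial_E^G S$ against $A$ and $B$: those whose spoke is a $T$-boundary edge are charged to $B$, while those whose spoke lies wholly inside or wholly outside $T$ are charged to the arcs, i.e.\ to $A$. The main obstacle is exactly this charging, and it is caused by high-degree roundabouts filled by a single long arc: such a roundabout puts many vertices into $|T|$ yet contributes only $2$ to $A$, so its weight can only be recovered from its spokes. What must be shown—and where non-amenability of $G$, not merely planarity, is indispensable—is that these long arcs cannot align with their neighbours so as to internalise almost all of their spokes: were that to happen for many $v$ simultaneously, the region $S$ would have $\mathrm{vol}(S)$ large relative to $|\partial_E^G S|$, contradicting the volume inequality of the second paragraph. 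Turning this into a clean dichotomy ``$A\ge\delta|T|$ or $B\ge\delta|T|$'' is the technical crux.
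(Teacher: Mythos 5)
There is a genuine gap, and you name it yourself: the final ``clean dichotomy $A\ge\delta|T|$ or $B\ge\delta|T|$'' is exactly the step you do not carry out, and your proposed rounding to $S=\{v:t_v\ge d(v)/2\}$ does not obviously deliver it. The problem is that $\mathrm{vol}(S)\le 2|T|$ is an upper bound, whereas to convert $|\partial_E^G S|\ge c\,\mathrm{vol}(S)$ into $|\partial_E T|\ge\delta|T|$ you would need to bound $|T|$ from above by a multiple of $\mathrm{vol}(S)$ plus quantities already charged to $A$ or $B$; the mass of $T$ sitting in roundabouts with $t_v<d(v)/2$ (a single long arc in a high-degree roundabout contributes only $2$ to $A$ and possibly nothing to $B$) is not controlled by anything you have established. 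So the crux is real and unresolved.

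What is frustrating is that you already hold the key identity: $B=|T|-2B_{\mathrm{in}}$ together with $B_{\mathrm{in}}\le|E(G[\overline T])|\le 3|\overline T|$ (planarity), where $\overline T$ is the set of $v\in V(G)$ whose roundabout meets $T$, gives $B\ge|T|-6|\overline T|$. This is precisely the paper's first sublemma, and it settles the case $|T|\ge 12|\overline T|$ outright. The missing piece is the complementary case $|T|<12|\overline T|$, where the paper does \emph{not} pass through a volume-isoperimetric inequality at all. Instead it splits on the fraction $\epsilon$ of roundabouts in $\overline T$ that are only partially contained in $T$: if $\epsilon$ is bounded below, each partially-contained roundabout, being a connected cycle meeting both $T$ and its complement, donates a distinct vertex to $\partial T$, giving $|\partial T|\ge\epsilon|\overline T|>\tfrac{\epsilon}{12}|T|$; if $\epsilon$ is small, one applies the non-amenability of $G$ directly to the set $\overline{\overline T}$ of fully-swallowed roundabouts, observing that every $G$-boundary vertex of $\overline{\overline T}$ either lies in $\overline T\sm\overline{\overline T}$ (few, by smallness of $\epsilon$) or has a roundabout meeting $\partial T$. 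Your volume inequality for $G$ is correct but turns out to be unnecessary; replacing your rounding step by this two-way split on $\epsilon$ would complete the argument.
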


\begin{proof}
Let $X$ be a finite vertex set of $\mathring{G}$. Let $\overline X$ be the set of those vertices of 
$G$ whose roundabouts contain vertices of $X$. 

We need to show that $|\partial X| \geq \gamma |X|$ for some $\gamma>0$. 
The next claim will imply this under the assumption that  $X$ is much larger than $\overline X$:
\labtequ{less_6}{Less than $6\cdot |\overline X|$ vertices of $X$ have all their neighbours in $X$.}
To prove this, let $Y$ be the set of those vertices of $X$ with all their neighbours in $X$.
If $v\in Y$, then the unique vertex of $\mathring{G}$ that shares an edge of $G$ with $v$ lies in 
$X$. Thus $|Y|\leq 2\cdot |E(\overline X)|$, where $E(\overline X)$ denotes the set of edges of 
${G}$ with both end-vertices in $\overline X$. As the subgraph $(\overline X, E(\overline X))$  of 
${G}$ spanned by $\overline X$ is planar, it has average 
degree less than 6, and so 
$|E(\overline X)|< 3\cdot |\overline X|$. Thus $|Y|< 6\cdot |\overline X|$ as claimed.
\medskip

Now if $|X|\geq 12\cdot |\overline X|$, then by \eqref{less_6}, at least $|X|/2$ vertices of $X$ 
have 
a neighbour outside 
$X$. As $\mathring{G}$ has maximum degree three, $\partial X$ then has size at least $|X|/6$, which 
fulfils our aim with $\gamma=1/6$. 

Hence it suffices to consider sets $X$ with $|X|< 12\cdot |\overline X|$, and we will assume this is 
true from now on. 

It is reasonable to expect that non-amenability is most difficult to prove when the set 
$X$ is a union of roundabouts. With this intuition in mind, it is natural to consider the 
following set. 
Let $\overline{\overline X}$ be the set of those vertices of $\overline X$, the whole roundabout of 
which
is 
in $X$. 
Let $\epsilon$ be the proportion of the 
remaining vertices of $\overline X$, that is, $\epsilon:= ( 
|\overline X|-|\overline{\overline X}|)/|\overline X|$.  Our next claim is
\labtequ{epsilon}{
 $|\partial X|> \frac{\epsilon}{12} |X|$.}
To see this, note that the roundabout $\mathring{x}$ of each $x\in \overline X\sm 
\overline{\overline X}$ contains a distinct vertex of $\partial X$, namely, a vertex contained in 
$\mathring{x}$ but not in $X$, hence $|\partial X|\geq |\overline X\sm \overline{\overline X}|= 
\epsilon \cdot |\overline X|$. Thus the 
claim follows from our assumption that $|X|< 12\cdot |\overline X|$. 
\medskip

If $\epsilon$ is bounded below, then \eqref{epsilon} says that $\mathring{G}$ is non-amenable.
Our next claim will help deal with the case where $\epsilon$ is small.
\labtequ{delta}{
$|\partial X|\geq K(\epsilon) \cdot  |X|$, where $K(\epsilon)=\frac{ch(G)\cdot (1-\epsilon)- 
\epsilon}{12}$.}
Indeed, a lower bound for the neighbourhood $\partial X$ of $X$ is the cardinality of the  set 
$\overline N$ of roundabouts 
containing vertices of $\partial X$.
Clearly, a vertex $x$ of the neighbourhood $\partial \overline{\overline{X}}$ of 
$\overline{\overline{X}}$ is in $\overline N$ unless it is in $\overline {X}$. As $x$ cannot be in 
$\overline{\overline{X}}$ we can strengthen this statement 
slightly by replacing $\overline {X}$ by $\overline {X}\setminus 
\overline{\overline{X}}$. Putting these observations together, we have
\[
 |\partial X|\geq |\overline N| \geq |\partial  \overline {\overline X}|- |\overline X\sm \overline 
{\overline X}|
\]
\[
\ \ \ \ \ \ \ \ \ \ \ \ \geq ch(G) \cdot  |\overline {\overline X}|- \epsilon |\overline X| 
\]
Note that $|\overline{\overline X}|=(1-\epsilon) \cdot |\overline X|$ by the definition of 
$\epsilon$. Since we are assuming that $ |\overline X| > |X|/12$, we obtain the desired $|\partial 
X|\geq \frac{ch(G)(1-\epsilon)-\epsilon}{12} \cdot  |X|$. 
\medskip

Combining \eqref{delta} with \eqref{epsilon} it is straightforward to check that $|\partial 
X|\geq\gamma |X| $ for some $\gamma>0$ depending on $ch(G)$. Thus $\mathring{G}$ is non-amenable.
\end{proof}

We can now prove one of the main results mentioned in the introduction.

\begin{proof}[Proof of \autoref{nonam_UK-trans_intro}]
If $G$ is non-amenable, then so is \Gr\ by \autoref{non-amenable_roundabout}.  Every non-amenable 
locally finite graph is transient as it contains a subtree with positive Cheeger-constant by a 
result of Benjamini and Schramm \cite{BeSchrChee}, and applying this fact to \Gr\ proves the statement.
\end{proof}

\begin{rem}
 The non-amenability condition in \autoref{nonam_UK-trans_intro} cannot be relaxed into 
the weaker  {\em anchored vertex expansion}. Here we say that $G$ has  {\em 
anchored vertex expansion}, if there 
is a constant $\gamma>0$ such that for
every finite connected vertex set $S$ of $G$ containing a fixed vertex $o$, we have $|\partial S| 
\geq \gamma\cdot |S|$. (That is, we modify the definition of non-amenability by just imposing the 
condition $o\in S$ and connectedness.) This is shown by the following example. 
\end{rem}

\begin{eg}\label{eg:anchor}
We construct a plane tree with non-zero anchored vertex expansion whose roundabout 
graph has zero anchored vertex expansion. We start with a ray whose 
vertices are labelled by the non-negative integers. For each squared number $n^2$, we attach 
a large tree at the vertex with that label. More precisely, at the vertex labelled $n^2$ we 
attach $(n+1)^2$ new neighbours, and at each of them we attach a full binary tree. 

We embed this graph in the plane as indicated in \autoref{fig:anchor}. The only property of 
this embedding we are using is that there is a face whose boundary contains  
the original ray as a subpath. 

It is straightforward to check that this tree $T$ has non-zero anchored vertex expansion but $T^\circ$ contains facial paths of length $n^2$ that have only $2n$ neighbours. Hence the  anchored vertex expansion of $T^\circ$ is zero. 

   \begin{figure}[htpb]
\begin{center}
   	  \includegraphics[height=1.85 cm]{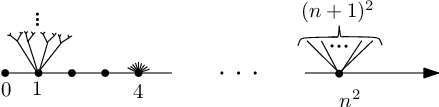} 
   	  \caption{A plane tree with non-zero anchored vertex expansion whose roundabout 
graph has zero anchored vertex expansion.}\label{fig:anchor}
\end{center}
   \end{figure} 
\end{eg}

\section{Degree-weighted energy}\label{sec9}

In this section we prove \autoref{supersuper-trans_mod} already mentioned in the introduction. 
We define the \emph{degree-weighted energy} ${\Ecal_{deg}(f)}$ of a flow $f$ in a graph $G$ to be 

\noindent {$\sum_{v\in V(G)} deg(v) \left(\sum_{e \ni v} |f(e)| \right)^2$}. 

\begin{cor}\label{supersuper-trans}
Let $G$ be a locally finite planar graph that admits a flow $f$ from
some vertex $v$ such that ${\Ecal_{deg}(f)}$
 is finite.
Then $G$ admits a non-constant Dirichlet harmonic function.
\end{cor}

\begin{proof}
By \autoref{thm:UK-trans}, it suffices to show that $\mathring{G}$ is transient. 
Towards this aim, we extend the flow $f$ on $G$ to a flow $g$ on $\mathring{G}$  from some vertex 
$v'$ in the roundabout of $v$ of finite (Dirichlet) energy by assigning values to the edges of the roundabouts. 

For a vertex $z$ of $\mathring{G}$, we denote by $e_z$ the unique edge of $z$ not in any 
roundabout. 
At each roundabout $\mathring{w}$ of a vertex $w\neq v$ of $G$, we have to solve a finite Dirichlet-Problem: we want 
to 
find a function $g_w$  assigning values to the edges of $\mathring{w}$ such that at each vertex 
$z\in \mathring{w}$, the superimposition of $g_w$ with $f$ satisfies \knl\ at all vertices of $\mathring{w}$. As $f$ satisfies \knl\ at $w$, it is easy to see that such a $g_w$ always exists, and it is unique up to adding a multiple of the constant 
flow around $\mathring{w}$. Similarly, we can define  a function $g_v$  on the edges of $\mathring{v}$ such that  the superimposition of $g_v$ with $f$ satisfies \knl\ at all vertices of $\mathring{v}$ except at a single vertex $v'$ of $\mathring{v}$, since $f$ does not satisfy \knl\ at $v$.

We may assume without loss of generality that these $g_w$ satisfy
\labtequ{kw}{$|g_w(k)|\leq \sum_{e\ni w} 
|f(e)|$ for every edge $k$ of $\mathring{w}$,} 
since otherwise we can add a constant flow of intensity $\sum_{e\ni w} 
|f(e)|$ around $\mathring{w}$ to decrease all values of $g_w$; indeed, this is possible because $|g_w(k) - g_w(k')|\leq \sum_{e\ni w} |f(e)|$ holds for every two edges $k,k'$ of $\mathring{w}$  by the definition of $g_w$.

Superimposing $f$ with all the $g_x$'s defines a flow $g$ from $v'$ on $\mathring{G}$. By \eqref{kw}, the energy of $g$
 is bounded, up to a constant depending on $g_v$, by 
${\cal E}(f) + \sum_{w\in V(G)} deg(w) \left(\sum_{e \ni w} |f(e)|\right)^2$, hence it is finite by 
our assumption (where we also used the fact that ${\Ecal_{deg}(f)}< \infty $ implies ${\Ecal (f)}< 
\infty $ by the definitions). Thus $\mathring{G}$ is transient by \autoref{lyons}.
\end{proof}

Given a locally finite graph $G$, for an edge $e=vw$ we let  $r(e)= deg(v)^2+deg(w)^2$. 
We say that $G$ is \emph{super transient} if there is a flow from some vertex with finite $r$-weighted energy, that is, $\sum_{e\in E(G)} f(e)^2 r(e)< \infty$. 
Note that super transience implies transience. Moreover, $G$ is super transient if and only if the 
graph $G[r]$, obtained
 from $G$ by replacing each edge $e$ with a path of length $r(e)$, is transient. The following 
implies \autoref{supersuper-trans_mod}.

\begin{cor}\label{super-trans}
 Every super transient planar locally finite graph $G$ has a non-constant Dirichlet harmonic 
function.
\end{cor}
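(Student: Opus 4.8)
The plan is to deduce \autoref{super-trans} from \autoref{supersuper-trans} by showing that the super transience hypothesis provides a flow satisfying the energy bound required there. Recall that super transience gives a flow $f$ of intensity $1$ out of some vertex with $\sum_{e\in E(G)} f(e)^2 r(e)<\infty$, where $r(e)=deg(v)^2+deg(w)^2$ for $e=vw$. The goal of \autoref{supersuper-trans} is a flow of intensity $1$ with $\sum_{v\in V(G)} deg(v)\bigl(\sum_{e\mid v\in e}|f(e)|\bigr)^2$ finite, so the whole task reduces to comparing these two quantities.

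First I would fix the flow $f$ witnessing super transience and estimate the target sum. For a fixed vertex $v$, by the Cauchy--Schwarz inequality applied to the $deg(v)$ edges incident with $v$,
\[
\left(\sum_{e\mid v\in e} |f(e)|\right)^2 \leq deg(v)\sum_{e\mid v\in e} f(e)^2 ,
\]
and hence
\[
deg(v)\left(\sum_{e\mid v\in e} |f(e)|\right)^2 \leq deg(v)^2 \sum_{e\mid v\in e} f(e)^2 .
\]
Summing over all $v\in V(G)$ and interchanging the order of summation, each edge $e=vw$ contributes $f(e)^2\bigl(deg(v)^2+deg(w)^2\bigr)=f(e)^2 r(e)$, so the double sum collapses to $\sum_{e\in E(G)} f(e)^2 r(e)$, which is finite by super transience. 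Thus $f$ itself already satisfies the hypothesis of \autoref{supersuper-trans}.

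With this bound in hand, the proof is immediate: the flow $f$ of intensity $1$ satisfies the finiteness condition of \autoref{supersuper-trans}, so that corollary yields a non-constant Dirichlet harmonic function on $G$, as desired. I expect the only genuine content to be the Cauchy--Schwarz step and the bookkeeping of the double sum; the main point to be careful about is verifying that the per-vertex estimate, after summation, reproduces exactly the $r$-weighted energy rather than something larger, which is why the factor $deg(v)$ in front is precisely calibrated to turn the $deg(v)^{-1}$ Cauchy--Schwarz loss into the $deg(v)^2$ weight. There is no serious obstacle here; the statement is essentially a reformulation of \autoref{supersuper-trans} under a cleaner, manifestly best-possible weighting, matching the remark that the order of magnitude of the weights is optimal.
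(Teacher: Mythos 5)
Your proof is correct and follows exactly the paper's argument: apply Cauchy--Schwarz at each vertex to get $\bigl(\sum_{e\mid v\in e}|f(e)|\bigr)^2\leq deg(v)\sum_{e\mid v\in e}f(e)^2$, multiply by $deg(v)$, sum over vertices so each edge picks up the weight $r(e)=deg(v)^2+deg(w)^2$, and invoke \autoref{supersuper-trans}. The paper's own proof is a two-line version of the same computation, so there is nothing to add.
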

 
\begin{proof}
By the Cauchy-Schwarz inequality, $\left(\sum_{e \ni v} |f(e)| \right)^2\leq deg(v) \sum_{e \ni v} f(e)^2$. 
Thus this follows from \autoref{supersuper-trans}.
\end{proof}

\begin{comment}
THIS CAN BE PROVED SOONER BY JUST APPLYING THM 1.1.
We can now re-prove the result of \cite{BeSchrHar} that motivated our work:

\begin{cor}[\cite{BeSchrHar}]\label{bd_degree}
Every transient planar graph of bounded degree has a  non-constant Dirichlet harmonic function.
\end{cor}

\begin{proof}
A transient bounded degree graph is super transient, so this follows from \autoref{super-trans}.
\end{proof}
\end{comment}

We remark that if we omit the assumption of planarity, then Corollaries~\ref{supersuper-trans} and 
\autoref{super-trans} become false as the example of the $3$-dimensional grid $\Zbb^3$ shows. 
The next example shows that \autoref{supersuper-trans} is tight in one more sense. 

\begin{eg}\label{grid_minor}
We construct a locally finite planar graph $G\in {\Ocal}_{HD}$ admitting a flow $f$ from some vertex such that for every $\epsilon>0$, we have\\
 ${\cal E}_\epsilon(f)=\sum_{v\in V(G)} {deg(v)}^{(1-\epsilon)} \left(\sum_{e \ni v} |f(e)| 
\right)^2< \infty$. 

In this construction, we rely on the fact that the 2-dimensional grid $\Zbb^2$ contains a subdivision $T$ 
of the infinite binary tree $T_2$
such that edges at level $n$ are subdivided at most $2^n$-times. It is straightforward to construct 
this subdivision $T$ recursively and we leave the details to the reader.
We obtain $G$ from $\Zbb^2$ by contracting for each edge $e$ of $T$ all but one of its subdivision 
edges.

By construction, both $G$ and its dual $G^*$ are 1-ended. Moreover, $G^*$ is obtained from $\Zbb^2$ by deleting edges (again, we are using the fact that deleting an edge in a plane graph corresponds to contracting the same edge in the dual, and vice-versa \cite{Oxley2}).
Thus by \autoref{flow_flow_char}, $G\in \Ocal_{HD}$.

Next, we construct $f$. 
Let $S$ be the subtree of $G$ consisting of those edges of $T$ that are not contracted. By 
construction, the tree $S$ is isomorphic to 
$T_2$.
Let $f$ be the flow from the root of the binary tree $T_2$ which assigns edges at level $n$ the 
value $2^{-n}$. 
Thus $f$ is a flow on $G$ with support $S$.

Let us estimate ${\cal E}_\epsilon(f)$. A vertex $v$ at level $n$ of 
$S$ has degree at most $20\cdot 
2^n$.
Thus\footnote{The constants here do not matter to us, hence we are generous. By choosing the 
edges that remain on the contracted subdivision paths so that they are initial, we can improve the 
above constant `$20$' to the constant `$3$', as then the branch set of the vertex $v$ consists of 
$2^n$ vertices each sending at most 3 edges out of the branch set (most of them actually send at 
most two vertices out). Then the constant `$1000$' below could be improved to `$4\cdot 3=12$'.}  
\[
 {\cal E}_\epsilon(f)\leq 1000\cdot \sum_{n\in \Nbb} 2^n \cdot {2^n}^{(1-\epsilon)} \cdot 2^{-2n}= 
1000\cdot \sum_{n\in \Nbb} {2^{-\epsilon n}}.
\]

Hence ${\cal E}_\epsilon(f)$ is finite, completing this example. 

\end{eg}

\section*{Acknowledgement}
We would like to thank Louigi Addario-Berry for suggesting the use of what we called the plane line 
graph.

\bibliographystyle{plain}
\bibliography{collective1}

\end{document}